\numberwithin{equation}{section}
\renewcommand{\epsilon}{\varepsilon}
\newtheorem{maintheorem}{Theorem}
\newtheorem{conjecture}{Conjecture}
\newtheorem{theorem}{Theorem}[section]
\newtheorem*{theorem*}{Theorem}
\newtheorem{lemma}[theorem]{Lemma}
\newtheorem{claim}[theorem]{Claim}
\newtheorem*{observation*}{Observation}
\newtheorem{fact}[theorem]{Fact}
\newtheorem{remark}[theorem]{Remark}
\theoremstyle{definition}{

\newtheorem{definition}[theorem]{Definition}
\newtheorem*{definition*}{Definition}

}
\newcommand{\E}{\mathbb E}
\renewcommand{\P}{\mathbb P}
\newcommand{\R}{\mathbb R}
\newcommand{\bv}{\mathbf r}
\newcommand{\p}{{\bf p}}
\newcommand{\q}{{\bf q}}
\DeclareMathOperator{\var}{Var}
\DeclareMathOperator{\rank}{rank}
\begin{document}

\title{On the Rigidity of Random Graphs in high-dimensional spaces}
\author{Yuval Peled}
\address{Einstein Institute of Mathematics\\
 Hebrew University\\ Jerusalem~91904\\ Israel.}
\email{yuval.peled@mail.huji.ac.il}
\thanks{Y. Peled was partially supported by the Israel Science Foundation grant ISF-3464/24.}
\author{Niv Peleg}
\address{Einstein Institute of Mathematics\\
 Hebrew University\\ Jerusalem~91904\\ Israel.}
\email{niv.peleg@mail.huji.ac.il}

\begin{abstract}
We study the maximum dimension $ {d}={d}(n,p)$ for which an Erd\H{o}s-R\'enyi  $G(n,p)$ random graph is $d$-rigid. Our main results reveal two different regimes of rigidity in $G(n,p)$ separated at $p_c=C_*\log n/n,~C_*=2/(1-\log 2)$ --- the point where the graph's minimum degree exceeds half its average degree. We show that if $ p < (1-\varepsilon)p_c $, then $ d(n,p) $ is asymptotically almost surely (a.a.s.) equal to the minimum degree of $ G(n,p) $. In contrast, if $ p_c \leq p = o(n^{-1/2}) $ then $ d(n,p) $ is a.a.s. equal to $ (1/2 + o(1))np $. The second result confirms, in this regime, a conjecture of Krivelevich, Lew, and Michaeli.

\end{abstract}
\maketitle

\section{Introduction}

A graph $G=(V,E)$ is called rigid in $\mathbb R^d$, or $d$-rigid, if for a generic embedding $\p:V\to\R^d$ (i.e., the $d|V|$ coordinates of $\p$ are algebraically independent over the rationals), every continuous motion of the vertices in $\R^d$ that starts at $\p$, and preserves the lengths of all the edges of $G$, does not change the distance between any two vertices. Asimow and Roth~\cite{AR78,AR79} discovered that this is equivalent to {\em infinitesimal} generic rigidity. In particular, a graph with more than $d$ vertices is $d$-rigid if the $|E| \times d|V|$ rigidity matrix of the framework $(G,p)$ is of rank $d|V|-\binom{d+1}2$ (see more in Subsection \ref{sub:rig}). It is easy to see that $1$-dimensional rigidity is equivalent to graph connectivity. In addition, Laman famously discovered a combinatorial characterization of planar rigidity ~\cite{Laman}. In contrast, for all $d>2$ the combinatorics of $d$-rigid graphs is far from being understood and the topic is studied extensively ~\cite{ book:Connelley-Simon,book:Graver-Servatius-Servatius,chapter:Jordan}.

The Erd\H{o}s-R\'enyi $G(n,p)$ random graph is the one of the most well-studied models in random graph theory ~\cite{bollobas1998random,frieze2015introduction}. A graph in this model contains $n$ vertices, and each edge appears independently with probability $p=p(n)$. The theory studies the properties that occur in $G(n,p)$ asymptotically almost surely (a.a.s.), i.e., with probability tending to $1$ as $n\to\infty$. A monotone graph property has a sharp threshold at $p_c=p_c(n)$ if for every $\varepsilon>0$, $G(n,(1-\varepsilon)p_c)$ a.a.s. does not have the property, but $G(n,(1+\varepsilon)p_c)$ a.a.s. does have it.

Rigidity of random graphs had  been primarily studied in Euclidean spaces of a fixed dimension $d$. The goal of this work is to study the generic rigidity of a $G(n,p)$ random graph in $\mathbb R^d$ where the dimension $d$ diverges as $n\to\infty$. The question we investigate is: 

\begin{center}
{\em What is the largest dimension $d=d(n,p)$ for which $G(n,p)$ is $d$-rigid?}
\end{center}

Clearly, a graph cannot be $d$-rigid if $d$ is greater than its minimum degree. In addition, it is known that for a fixed dimension $d$, the sharp threshold probability for $d$-rigidity of $G\sim G(n,p)$ coincides with the threshold probability for the property that its minimum degree $\delta(G)$ is at least $d$. The case $d=1$ of this statement is precisely the seminal result of Erd\H{o}s and R\'enyi regarding the connectivity threshold of random graphs~\cite{ER}. The case $d=2$ was solved by Jackson, Servatius and Servatius~\cite{JSS-planeThreshold}. For larger fixed dimensions $d>2$, the problem was studied by Kir\'{a}ly, Theran, and Tomioka~\cite{Kiraly-Theran:RigitidyThreshold}, Jord\'an and Tanigawa~\cite{Jordan-Tanigawa:RigitidyThreshold}, and was resolved by Lew, Nevo, the first author and Raz.

\begin{theorem}[\cite{LNPR}]\label{thm:LNPR}
For every fixed dimension $d\ge 1$,  $d$-rigidity of $G(n,p)$ has a sharp threshold at 
$
p=\left(\log n + (d-1)\log\log n\right)/n.
$
\end{theorem}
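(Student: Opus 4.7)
The sharp-threshold statement splits into a lower and an upper bound. The lower bound is immediate from a classical fact: the sharp threshold for $\delta(G(n,p))\geq d$ is exactly $(\log n+(d-1)\log\log n)/n$ (Erd\H{o}s--R\'enyi for $d=1$, Bollob\'as in general), and a $d$-rigid graph on more than $d$ vertices must have minimum degree $\geq d$, so below the threshold $G(n,p)$ fails to be $d$-rigid a.a.s.

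For the upper bound I would use a two-round exposure. Write $p=(1+\epsilon)p_c$ as $p_1+p_2-p_1p_2$ with $p_1=(1+\epsilon/2)p_c$ and a small $p_2$, and couple $G(n,p)=G_1\cup G_2$ where $G_1\sim G(n,p_1)$ and $G_2\sim G(n,p_2)$ are independent. Let $S\subseteq V$ be the set of vertices whose $G_1$-degree is smaller than $(1-\eta)\log n$ for a small $\eta>0$; Chernoff bounds give $|S|\leq n^{1-\alpha}$ a.a.s.\ for some $\alpha>0$, and a short moment calculation also shows that $S$ is an independent set in $G_1$ a.a.s. Set $R:=V\setminus S$.

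The heart of the proof is to show that $G_1[R]$ is $d$-rigid a.a.s., using a \emph{rigidity-growth} argument: if $H$ is $d$-rigid on $V(H)$ and $v\notin V(H)$ has at least $d$ neighbors in $V(H)$, then $H$ together with $v$ and those $d$ edges is again $d$-rigid (the standard vertex-addition move in the generic rigidity matroid in $\R^d$). Starting from a small $d$-rigid seed which exists in $G_1[R]$ a.a.s.\ by a direct moment calculation, one iteratively absorbs vertices of $R$; the absorption is shown to exhaust $R$ by ruling out ``stuck'' subsets $T\subseteq R$ with fewer than $d|T|$ edges to $R\setminus T$, via a first-moment union bound over $(T,N(T))$ configurations, leveraging that every $v\in R$ has $G_1$-degree $\geq(1-\eta)\log n\gg d$.

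Finally I would use $G_2$ to attach $S$: for each $v\in S$ independently, the number of $G_2$-neighbors in $R$ is $\bin(|R|,p_2)$ with mean $\Theta(\log n)$, so a union bound over the $|S|=n^{o(1)}$ bad vertices shows that every $v\in S$ acquires at least $d$ neighbors in $R$; one application of vertex-addition per $v\in S$ then extends the $d$-rigidity of $G_1[R]$ to all of $V$. The main obstacle will be the middle step---rigidity of $G_1[R]$---because for $d\geq 3$ there is no Laman-type combinatorial certificate of $d$-rigidity, so one must work directly in the rigidity matroid via iterative vertex addition, and the first-moment calculations must be tight enough to survive the $\Theta(\log n/n)$ density regime where typical degrees are only $(1\pm o(1))\log n$ and there is very little slack.
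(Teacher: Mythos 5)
Your lower bound is fine and is the standard argument. The upper bound, however, is not the route taken in \cite{LNPR} (which this paper only cites; its strategy, mirrored here, is: show the $d$-rigidity \emph{closure} of $G$ is dense, extract a large clique in the closure, then bootstrap), and your route has a fatal gap at the seed step.

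The problem is that for $d\ge 3$ and $p=\Theta(\log n/n)$, the graph $G_1[R]$ contains \emph{no} small $d$-rigid subgraph, so no "direct moment calculation" can produce a seed. Any $d$-rigid graph on $k>d$ vertices has at least $dk-\binom{d+1}{2}$ edges, and a first-moment count shows that subgraphs of $G(n,C\log n/n)$ with $k$ vertices and $\approx dk$ edges exist only once $k\gtrsim n/(\log n)^{d/(d-1)}$; in particular the expected number of copies of $K_{d+1}$ is $n^{d+1}p^{\binom{d+1}{2}}\to 0$ for $d\ge 3$. Worse, any graph certifiable as $d$-rigid by iterated vertex additions (Henneberg $0$-steps) necessarily contains a $K_{d+1}$, so the entire "seed plus absorption inside $G_1$" scheme cannot even get started: the rigid subgraphs that do exist are of nearly linear size and admit no greedy vertex-addition certificate. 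This is precisely why \cite{LNPR} works in the rigidity matroid closure $C_d(G)$ rather than in $G$ itself --- the closure contains edges (indeed a large clique) absent from $G$, and that clique plays the role of your seed. Your absorption step (ruling out stuck sets via a union bound) and your sprinkling of $G_2$ onto the low-degree set $S$ are essentially sound once a nearly spanning rigid set is in hand --- they correspond to the bootstrap via Fact~\ref{fct:heavy_vertex} and Claim~\ref{clm:inner} here --- but they cannot substitute for the missing seed. (For $d=1,2$ a triangle or an edge does serve as a seed, so your argument is closer to workable there, though even for $d=2$ the sharp constant in front of $\log\log n$ requires more care than a crude union bound over stuck sets.)
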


This theorem tells us that if $np=\log n + O(\log\log n)$,  then the bottleneck for rigidity of $G(n,p)$ lies in its minimum degree. On the other hand, Krivelevich, Lew and Michaeli ~\cite{krivelevich2023rigid} conjecture that if $np=\omega(\log n)$, then the bottleneck for rigidity of $G(n,p)$ lies in its {\em number of edges}. More precisely, it is clear that if $G\sim G(n,p)$ is a.a.s $d$-rigid then the inequality $p\binom n2\ge dn-\binom{d+1}2$ is satisfied, since the number of edges in $G$ concentrates around its mean. This reformulates to the inequality $d\le (1-o(1))n(1-\sqrt{1-p})$, and the conjecture is that this bound is sharp. 

\begin{conjecture}[Conjecture 1.11 in \cite{krivelevich2023rigid}]\label{conj:KLM}
    For every  $p=\omega(\log n /n)$, and every fixed $\varepsilon>0$, a random graph $G\sim G(n,p)$ is a.a.s $d$-rigid if $d<(1-\varepsilon)n(1-\sqrt{1-p})$.
\end{conjecture}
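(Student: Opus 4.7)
My plan is to prove the conjecture via the classical vertex-addition approach to generic rigidity. The key ingredient is the following well-known lemma: if $H$ is $d$-rigid on vertex set $S$, and $v\notin S$ has at least $d$ neighbors in $S$ (in the ambient graph), then $H$ together with $v$ and any $d$ of its edges into $S$ is also $d$-rigid. Iterating this, one can build a $d$-rigid spanning subgraph of $G \sim G(n,p)$ starting from a small $d$-rigid ``seed'', which suffices to conclude that $G$ itself is $d$-rigid.

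The plan is organized in three steps. \emph{Step 1.} Identify a small ``seed'' set $S_0 \subset V$ and a subgraph $H_0 \subseteq G[S_0]$ that can be shown to be $d$-rigid. \emph{Step 2.} Order the vertices of $V \setminus S_0$ as $v_1, v_2, \dots$, and verify that a.a.s.\ each $v_i$ has at least $d$ neighbors in the current rigid set $T_i := S_0 \cup \{v_1, \dots, v_{i-1}\}$. Since the degree of $v_i$ into any fixed set of size $|T|$ is $\bin(|T|, p)$, this reduces to Chernoff bounds: because $|T_i| \ge |S_0|$ and $d \le (1-\varepsilon)np/2$ in our range, the probability of failure at a particular step is exponentially small in $np$, and a union bound over the $n$ choices of $v_i$ succeeds provided $np$ is large enough relative to $\log n$, which is ensured by $p \ge p_c$. \emph{Step 3.} Apply the vertex-addition lemma iteratively to obtain a $d$-rigid spanning subgraph of $G$, proving $G$ is $d$-rigid.

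The main obstacle is Step~1: constructing the rigid seed. A $d$-rigid graph on $|S_0|$ vertices must contain at least $d|S_0|-\binom{d+1}{2}$ edges, and in the sparse regime $p = o(n^{-1/2})$ this forces $|S_0|$ to be of order $n$ simply to meet the edge count that $G(n,p)$ can supply. But taking $|S_0|$ of order $n$ is circular: proving rigidity of $G[S_0]$ is essentially the same statement at a slightly smaller scale. Two possible remedies come to mind. One is inductive, using the coning identity that $H$ is $d$-rigid iff the cone $H \ast v$ is $(d+1)$-rigid, to peel off one dimension at a time and reduce to a low-dimensional base case where Theorem~\ref{thm:LNPR} or related results apply. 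The other is to locate an atypical, denser substructure in $G$ (say, a suitably boosted common neighborhood of several high-degree vertices) and prove rigidity of the core by a direct argument exploiting its extra density. Reconciling either of these constructions with the tight dimension bound $d < (1-\varepsilon)n(1-\sqrt{1-p})$ is where I expect most of the technical difficulty to lie.
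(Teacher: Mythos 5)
You have correctly identified where the difficulty lies, but the proposal does not close it: Step~1 (the rigid seed) is precisely the content of the whole problem, and neither of your two suggested remedies works. The coning route would require, to peel off $d\sim np/2$ dimensions, on the order of $d$ cone vertices each adjacent to essentially everything that remains, which $G(n,p)$ does not supply. The ``denser substructure'' route is blocked by the edge count you yourself compute: any $d$-rigid subgraph on $s$ vertices needs at least $ds-\binom{d+1}{2}$ edges, and for $d$ close to $n(1-\sqrt{1-p})$ this forces $s\ge(1-o(1))n$, so there is no smaller scale at which the problem becomes easier. Note also that the full conjecture is \emph{not} proved in this paper; Theorem~\ref{thm:np/2} establishes it only for $C_*\log n/n\le p=o(n^{-1/2})$, and the regime $p=\Omega(n^{-1/2})$ is listed as open.

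The way the paper escapes the circularity is to abandon the search for a $d$-rigid subgraph of $G$ and instead work with the matroid closure $C_d(G)$, which is a much more forgiving object. Lemma~\ref{lem:process} shows, by revealing the rows of the rigidity matrix sequentially, that a.a.s.\ a constant fraction of all $\binom{n}{2}$ pairs lie in $C_d(G)$ --- and, crucially, that the same holds for the contracted closures $C_{d,A}(G)$ for every $A$ with $|A|\le 0.9n$ simultaneously. Density of a closed graph is then converted into a large clique via Vill\'anyi's Lemma~\ref{lem:vil} (a closed graph of large minimum degree has a vertex whose neighborhood is a clique), and disjoint large cliques in the closure are merged using a matching of $\binom{d+1}{2}$ edges (Claim~\ref{clm:matching}); both of these steps are where the hypothesis $p=o(n^{-1/2})$ enters. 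Only after a clique on $0.9n$ vertices is found in $C_d(G)$ does the argument switch to the vertex-by-vertex absorption you describe, using Fact~\ref{fct:heavy_vertex} in place of your vertex-addition lemma. Your Steps~2--3 are essentially this last phase (and your union bound is fine in the regime $np/\log n\to\infty$), but without the closure-plus-Vill\'anyi mechanism there is no seed to start from, so the proposal as it stands does not yield a proof.
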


Among other things, their paper establishes, using strong $d$-rigid partitions, the weaker assertion that $G$ is a.a.s $(c\cdot np/\log{np})$-rigid for some constant $c>0$. 

\medskip

Given these two trivial bottlenecks for rigidity, the first step in our research was to compute the regimes of $p$ in which one bottleneck is more restrictive than the other. Namely, let $ p_c := C_* \log n/{n} $, where $ C_* := {2}/{(1 - \log 2)} \approx 6.518 $. It turns out that for every $\varepsilon > 0$, the inequality $\delta(G) < n(1 - \sqrt{1 - p})$ holds a.a.s.\ if $p < (1 - \varepsilon)p_c$, while the reverse inequality holds a.a.s. if $p > (1 + \varepsilon)p_c$. Further details can be found in Subsection \ref{sub:min} and Figure \ref{fig}.

In particular, if $p>p_c$ then $G$ a.a.s. does not have enough edges to be $\delta(G)$-rigid. Our first main theorem asserts that this is the only bottleneck for $\delta(G)$-rigidity in $G(n,p)$.

\begin{maintheorem}\label{thm:delta}
Fix $\varepsilon>0$ and let $G\sim G(n,p)$. Then,
\[
\P(G\mbox{ is $\delta(G)$-rigid}) \to
\left\{ 
\begin{matrix}
    1&p<(1-\varepsilon)\frac{C_*\log n}n \vspace{0.2cm} \\ 
    0&p>(1+\varepsilon)\frac{C_*\log n}n
\end{matrix}
\right.,
\]
as $n\to\infty$, where $C_*=2/(1-\log 2)$. 
\end{maintheorem}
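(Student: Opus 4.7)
The plan here is short: combine the necessary ``edge-count'' inequality with the defining property of $p_c$. Any $d$-rigid graph on $n\ge d+1$ vertices satisfies $|E(G)| \ge dn-\binom{d+1}2$. By the discussion preceding the theorem, a.a.s.\ $\delta(G) \ge n(1-\sqrt{1-p}) + \Omega(\log n)$, while Chernoff gives $|E(G)| = p\binom n2 + O(\sqrt{n}\log n)$ a.a.s.; together these yield $\delta(G)\cdot n-\binom{\delta(G)+1}2 > |E(G)|$ a.a.s., contradicting $\delta(G)$-rigidity.

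\textbf{Lower bound ($p < (1-\varepsilon)p_c$).} Write $\delta := \delta(G)$. I plan a four-step ``remove-rigidify-extend'' scheme. First, let $L \subseteq V$ be the set of vertices with degree at most $D$, for a suitable deterministic threshold $D$ chosen slightly above the typical value of $\delta$; standard binomial tails and a union bound give $|L|=n^{o(1)}$ a.a.s. Second, a union bound over pairs shows a.a.s.\ no two vertices of $L$ are adjacent, so every $v \in L$ has at least $\delta$ neighbors in $V\setminus L$. Third, prove that the core $H := G[V\setminus L]$ is a.a.s.\ $D$-rigid; combined with the monotonicity of generic rigidity (any $d$-rigid graph on $\ge d+1$ vertices is also $d'$-rigid for every $d'\le d$, shown e.g.\ by lifting a generic embedding in $\R^{d-1}$ to a generic embedding in $\R^d$), this yields $\delta$-rigidity of $H$. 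Fourth, reinsert the vertices of $L$ one at a time as $0$-extensions: each has at least $\delta$ neighbors already present, so $\delta$-rigidity is preserved, and $G$ is $\delta$-rigid a.a.s.

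\textbf{Main obstacle.} The third step is the technical heart. The best prior result, of Krivelevich--Lew--Michaeli \cite{krivelevich2023rigid}, only delivers $\Theta(np/\log(np))$-rigidity when $np=\omega(\log n)$, whereas here the target dimension is $D=\Theta(np)$. Bridging this gap will likely require a substantial refinement of the strong $d$-rigid partition machinery of \cite{krivelevich2023rigid}, crucially exploiting that in the regime $p<p_c$ the dimension $D$ sits strictly below $np/2$, leaving enough edge-budget to simultaneously rigidify many pieces of a partition of $V\setminus L$ and their gluings. A spectral or absorption argument capitalizing on the uniformly high minimum degree of $H$ would be a natural alternative.
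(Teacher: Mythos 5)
Your upper bound (the case $p>(1+\varepsilon)p_c$) is correct and is essentially the paper's argument: a.a.s.\ $\delta(G)>(1+\Omega(1))np/2$ while $|E(G)|=(1+o(1))pn^2/2$, so the count $|E(G)|\ge \delta(G)n-\binom{\delta(G)+1}{2}$ fails.

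For the lower bound, however, your proposal has a genuine gap at exactly the point you flag as the ``technical heart'': Step~3, the assertion that the core $H=G[V\setminus L]$ is a.a.s.\ $D$-rigid with $D=\Theta(np)$. This is not a step of a proof but a restatement of the theorem's entire difficulty --- $H$ is itself a near-regular random-like graph of minimum degree roughly $D$, so proving it is $D$-rigid is essentially the original problem with the low-degree vertices stripped off. You offer only the hope that the Krivelevich--Lew--Michaeli partition machinery (which gives $\Theta(np/\log(np))$-rigidity, a $\log$ factor short) can be ``substantially refined,'' or that a ``spectral or absorption argument'' exists; neither is supplied, and no mechanism is proposed for closing the logarithmic gap. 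The paper's route to this core fact is entirely different and is where all the work lies: it bounds from below the size of the closure $C_{d,A}(G)$ in every contraction of the rigidity matroid by a set $A$ with $|A|\le 0.9n$ (a row-sampling argument, Lemma~\ref{lem:process} applied simultaneously to all such $A$), extracts from this a clique on $0.9n$ vertices in $C_d(G)$ via Vill\'anyi's lemma and a matching bootstrap (Lemma~\ref{lem:clique_in_closure}), and then completes the clique using Fact~\ref{fct:heavy_vertex} together with a dichotomy for small vertex sets $B$: either some $v\in B$ has $\ge(a+\delta)\log n>\delta(G)$ neighbors outside $B$, or $G[B]$ has an isolated vertex, which then has all of its $\ge\delta(G)$ neighbors in the clique (Claim~\ref{clm:inner}). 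Your Steps 1, 2 and 4 play roughly the role of this last dichotomy, but without a proof of Step~3 the argument does not go through.

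Two smaller points. First, with $D$ a fixed constant multiple of $\log n$ strictly above $a(c)\log n$, the set $L$ has expected size $n^{\varphi_c(D/\log n)}=n^{\Theta(1)}$, not $n^{o(1)}$; your Step~2 (no two vertices of $L$ adjacent) still works provided $\varphi_c(D/\log n)<1/2$, so choose $D$ accordingly. Second, the monotonicity you invoke ($d$-rigid $\Rightarrow$ $d'$-rigid for $d'\le d$) is correct, but the cleaner justification is the one in the paper: the $d'$-rigidity matrix is a column-submatrix of the $d$-rigidity matrix, so $C_{d'}(G)\supseteq C_d(G)$.
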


We note that Theorem \ref{thm:delta} proves, in particular, that (the complement of) $\delta(G)$-rigidity has a threshold in $G(n,p)$, even though it is not a monotone property.

For $p\ge p_c$, we are able to prove Conjecture \ref{conj:KLM} under the additional assumption that $p=o(n^{-1/2})$. Note that $1-\sqrt{1-p}=(1/2+o(1))p$ in this regime, since $p=o(1)$.

\begin{maintheorem}\label{thm:np/2}
Let $C_*=2/(1-\log 2)$, and suppose that $C_*\log n/n \le p= o(n^{-1/2})$.  If  $G\sim G(n,p)$ then, for every fixed $1/2>\varepsilon>0$ and integers $d=d(n)$,
\[
\P(G\mbox{ is $d$-rigid}) \to
\left\{ 
\begin{matrix}
    1&d<(1/2-\varepsilon)np\vspace{0.2cm} \\ 
    0&d>(1/2+\varepsilon)np
\end{matrix}
\right.,
\]
as $n\to\infty$.
\end{maintheorem}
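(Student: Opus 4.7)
For the upper bound, a $d$-rigid graph on $n \ge d+1$ vertices must have at least $dn - \binom{d+1}{2}$ edges. With $d > (1/2+\varepsilon)np$ and $p = o(n^{-1/2})$, the correction $\binom{d+1}{2} = O((np)^2) = o(n^2 p)$ is of lower order, so $dn - \binom{d+1}{2} \ge (1+2\varepsilon - o(1))\binom{n}{2}p$. Since $|E(G)| \sim \bin(\binom{n}{2}, p)$ is concentrated within a $(1\pm o(1))$ multiplicative factor of its mean by Chernoff, a.a.s.\ $|E(G)| < dn - \binom{d+1}{2}$, so $G$ cannot be $d$-rigid.

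For the lower bound, my plan is to construct a $d$-rigid spanning subgraph of $G$ a.a.s.\ by iterated $d$-extensions. Recall that a $d$-extension --- appending a new vertex to a $d$-rigid graph via $d$ edges --- preserves $d$-rigidity, since for a generic realization the new vertex is pinned by its $d$ new constraints. I would expose $G$ in two rounds $G = G_1 \cup G_2$ with $G_i \sim G(n, p_i)$ independent and $p_2 = o(p)$, and proceed as follows: (i) use $G_1$ to find a subset $S \subseteq V$ with $|S| = (1-\eta)n$ such that $G_1[S]$ is $d$-rigid; (ii) for each $v \in V \setminus S$, attach $v$ to the rigid core via a $d$-extension using $d$ of its $G$-neighbors in $S$. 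Step (ii) is the easy step: since $|S|p = (1-\eta)np \gg 2d$, Chernoff and a union bound over the $\eta n$ vertices in $V \setminus S$ a.a.s.\ supply the required $d$ neighbors for each.

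The core difficulty is step (i), producing a $d$-rigid subgraph of $G_1[S] \sim G(|S|, p_1)$. A natural recursive plan is to observe that $d < (1/2-\varepsilon')|S|p_1$ for some $\varepsilon' > 0$ provided $\eta$ and the round split are chosen carefully, which reduces the problem to the same theorem on fewer vertices. The hard part is controlling the drift of $\varepsilon$ through the recursion and terminating at a manageable base case: the Krivelevich--Lew--Michaeli rigid partition theorem gives only $\Theta(np/\log(np))$-rigidity, which is short of the target by a logarithmic factor, so closing this gap requires an essentially edge-optimal construction of a $d$-rigid subgraph. This is, I expect, the main technical contribution the proof must supply --- possibly via bipartite rigidity (e.g.\ Whiteley's theorem) applied to a carefully chosen bipartition of $V$, or via a sharpening of the rigid-partition machinery that exploits the fact that $p = o(n^{-1/2})$ forces $d = o(\sqrt n)$ and hence only a relatively small core needs to be explicitly constructed.
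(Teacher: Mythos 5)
Your flexibility (upper-bound) argument is correct and is exactly the paper's: a $d$-rigid graph needs $dn-\binom{d+1}{2}$ edges, and concentration of $|E(G)|$ rules this out for $d>(1/2+\varepsilon)np$. Your step (ii) — attaching each leftover vertex to a rigid core through $d$ of its neighbors — also parallels the final ``bootstrap'' step of the paper, which shows that every set $B$ with $|B|\le 0.1n$ contains a vertex with $d$ neighbors outside $B$ and then grows a clique in the closure via Fact~\ref{fct:heavy_vertex}.

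However, your step (i) is a genuine gap, and it is precisely where the entire difficulty of the theorem sits. You propose to produce the rigid core $G_1[S]$ by ``reducing the problem to the same theorem on fewer vertices,'' but this recursion is circular: it has no base case, since you yourself note that the only available black box (the Krivelevich--Lew--Michaeli rigid-partition bound) gives rigidity only in dimension $\Theta(np/\log(np))$, a logarithmic factor short of the target. The alternatives you gesture at (Whiteley's bipartite rigidity, a sharpened rigid-partition argument) are not developed, so no edge-optimal rigid core is actually constructed. The paper's route is different in kind: it never builds a rigid subgraph by extensions. Instead it works with the $d$-rigidity \emph{closure}. A sprinkling/spanning-process lemma (Lemma~\ref{lem:process}) shows that when the number of sampled rows of the rigidity matrix, about $(1-\varepsilon)p\binom n2$, comfortably exceeds the ambient rank $dn$ --- which is exactly what $d<(1/2-\varepsilon)np$ guarantees --- the closure $C_d(G)$ captures a constant fraction of all pairs; crucially this is applied simultaneously to all contracted matroids $C_{d,A}(G)$ with $|A|\le 0.9n$ to make the closure ``well spread.'' A large clique is then extracted from the dense closed graph using Vill\'anyi's lemma (this is the step that forces $p=o(n^{-1/2})$, since it needs $d(d+1)=o(n)$), and disjoint cliques are merged via a matching of $\binom{d+1}2$ edges (Claim~\ref{clm:matching}). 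Without some substitute for this closure-plus-Vill\'anyi machinery, your outline does not close.
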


\begin{figure}
    \centering
    \includegraphics[width=1\linewidth]{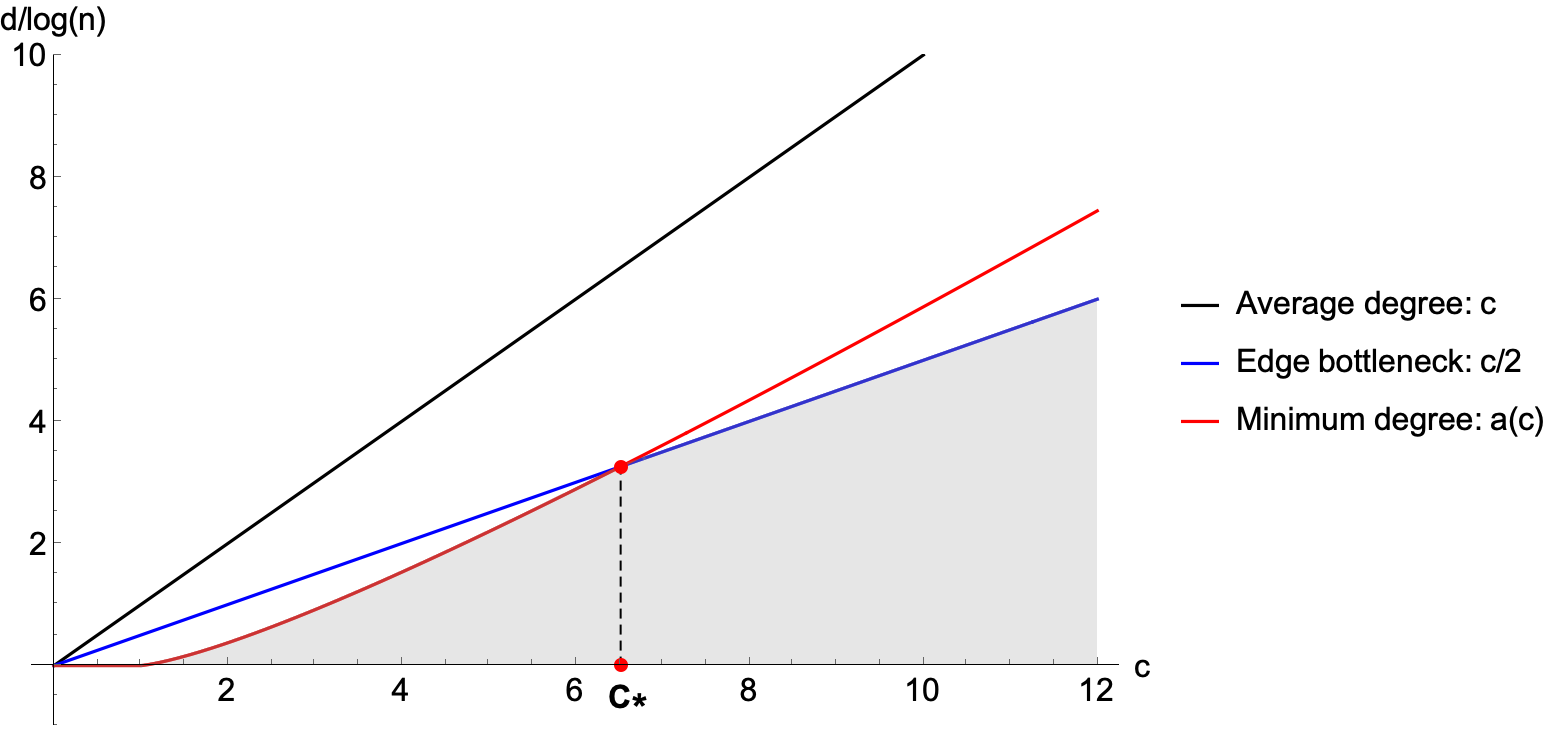}
    \caption{Rigidity of $G(n,p)$  where $p=c\log n/n$. The shaded area illustrates the values of $(c,d)$ for which $G(n,p)$ is a.a.s.\, $d$-rigid (note that the $Y$-axis is normalized by $\log n$). If $c<C_*$ the minimum-degree is the bottleneck for rigidity and otherwise the edge number is the bottleneck.}
    \label{fig}
\end{figure}

The flexibility statements in both our theorems are immediate and follow directly from a dimension counting argument.  The proofs of the rigidity statements are carried out using a similar strategy to ~\cite{LNPR}. First, we prove that the closure of $G$ in the rigidity matroid is dense. Subsequently, we deduce that the closure contains a large clique. The final step uses the combinatorial expansion properties of the random graph. Namely, we "bootstrap" the large clique in its closure to show it must be rigid. However, elements of this strategy that succeeded for fixed-dimensional rigidity break down when the dimension scales with the graph size, and two essential new ideas were needed.

First, for large $d$, we could not find an elementary way to derive the existence of a large clique in the closure from the assertion that it is dense. Fortunately, this can be derived using ideas from Vill\'any's beautiful proof of the Lov\'asz-Yemini conjecture ~\cite{villain}. Primarily, it is our use of  Vill\'any's argument that requires us assume that $p=o(n^{-1/2})$ in Theorem \ref{thm:np/2}. 

Second, as $\varepsilon$ approaches $0$, i.e., as $p$ approaches $C_* \log n /n$ in Theorem \ref{thm:delta} and as $d$ approaches $np/2$ in Theorem \ref{thm:np/2}, the lower bound our probabilistic method gives on the density of the closure deteriorate substantially. In consequence, the size of the clique in the closure we are able to guarantee is far too small to set off the final combinatorial ``bootstrap" part of our argument. To overcome this challenge, we extend our probabilistic approach and show that the closure is, roughly speaking, {\em evenly distributed} across the vertices. This strategy allows us to establish the existence of a very large clique in the closure. To achieve this goal, we work with many projections of the generic rigidity matrix.

\subsection{Generic global rigidity and reconstruction} A $d$-dimensional framework $(G,\p)$ is called {\em globally} $d$-rigid if every embedding $\p':V\to \R^d$ that realizes the pairwise distances $\|\p(v)-\p(u)\|$ for every edge $uv\in E$ is obtained from $\p$ by an isometry of $\R^d$. We say that a graph $G$ is globally $d$-rigid if $(G,\p)$ is globally $d$-rigid for some generic embedding $\p$. For a comprehensive introduction to global rigidity, its distinctions from rigidity and infinitesimal rigidity, and an overview of key developments in this area over recent decades, we refer the reader to~\cite{tanigawa2015sufficient, jordan2017global} and the references therein. For random graphs, it is shown in ~\cite{LNPR} that for a fixed $d$, a.a.s., a $G(n,p)$ graph is globally $d$-rigid if its minimum degree is $d+1$. In addition, non-generic global $1$-rigidity in random graphs has been studied recently~\cite{benjamini2022determining,girao2024reconstructing,montgomery2024global} in the context of reconstructing embeddings from random distance samples.

We recall two results from the theory of global rigidity that are needed here. First, the minimum degree of a globally $d$-rigid graph must be at least $d+1$ because a vertex with a smaller degree can be reflected across an affine hyperplane spanned by its neighbors. This produces a distinct embedding of the vertices that preserves the distances between adjacent pairs (see \cite{hendrickson1992conditions}). Second, Jord\'an proved that every $(d+1)$-rigid graph is globally $d$-rigid ~\cite{jordan2017Aglobal}. Therefore, the following theorems follow directly from our main theorems.

\begin{theorem}
Fix $\varepsilon>0$ and let $G\sim G(n,p)$. Then,
\[
\P(G\mbox{ is globally $(\delta(G)-1)$-rigid}) \to
\left\{ 
\begin{matrix}
    1&p<(1-\varepsilon)\frac{C_*\log n}n \vspace{0.2cm} \\ 
    0&p>(1+\varepsilon)\frac{C_*\log n}n
\end{matrix}
\right.,
\]
as $n\to\infty$.
\end{theorem}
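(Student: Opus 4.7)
My plan follows the structure suggested by the paper's own remark that these corollaries follow from the main theorem combined with the two classical facts about global rigidity cited just before the statement. For the upward direction, suppose $p<(1-\varepsilon)p_c$. Theorem \ref{thm:delta} asserts that $G$ is a.a.s.\ $\delta(G)$-rigid, and applying Jord\'an's theorem \cite{jordan2017Aglobal} (which upgrades $(d+1)$-rigidity to global $d$-rigidity) with $d=\delta(G)-1$ immediately yields that $G$ is a.a.s.\ globally $(\delta(G)-1)$-rigid.

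For the downward direction, suppose $p>(1+\varepsilon)p_c$. Since global $d$-rigidity implies $d$-rigidity, it suffices to prove that $G$ is a.a.s.\ not $(\delta(G)-1)$-rigid; I plan to do this via the edge-count argument that already drives the downward direction of Theorem \ref{thm:delta}. Any $d$-rigid graph on $n>d$ vertices has at least $dn-\binom{d+1}{2}$ edges. The analysis of $\delta(G)$ above $p_c$ promised in Subsection \ref{sub:min} should yield, for $p>(1+\varepsilon)p_c$, a quantitative bound $\delta(G)\ge(\tfrac12+\kappa)\,np$ a.a.s.\ for some $\kappa=\kappa(\varepsilon)>0$; in particular the slack $s:=\delta(G)-np/2$ satisfies $s=\Theta(\log n)\to\infty$. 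Setting $d=\delta(G)-1$ gives
\[
(\delta(G)-1)n-\binom{\delta(G)}{2}\;=\;\tfrac{n^2 p}{2}+(s-1)n-O(\log^2 n),
\]
while a Chernoff bound on $|E(G)|\sim\bin(\binom{n}{2},p)$ gives $|E(G)|=\tfrac{n^2 p}{2}+O(\sqrt{n}\log n)$ a.a.s. Since $(s-1)n=\Theta(n\log n)$ dominates all error terms, the required number of edges for $(\delta(G)-1)$-rigidity exceeds $|E(G)|$ a.a.s., as desired.

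The main obstacle is precisely that Theorem \ref{thm:delta} cannot be applied as a black box for the downward direction: it rules out $\delta(G)$-rigidity, whereas we need to rule out the formally weaker property of $(\delta(G)-1)$-rigidity. Consequently, one must establish not only that the gap $\delta(G)-n(1-\sqrt{1-p})$ is a.a.s.\ positive (the content of the characterization of $p_c$) but that it diverges with $n$. This stronger statement follows from the continuity and strict monotonicity of the large-deviations rate function governing $\delta(\bin(n-1,p))$ and should be essentially built into the quantitative analysis underlying Theorem \ref{thm:delta}, so no new probabilistic input is required.
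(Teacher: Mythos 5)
Your proposal is correct and follows essentially the same route the paper intends: Jord\'an's theorem upgrades the a.a.s.\ $\delta(G)$-rigidity from Theorem \ref{thm:delta} to global $(\delta(G)-1)$-rigidity below $p_c$, and above $p_c$ the same edge-count computation that drives the negative direction of Theorem \ref{thm:delta} (using the quantitative bound $\delta(G)>(1+\delta)np/2$ from Claim \ref{clm: min_deg_small_p}, whose slack $\Theta(n\log n)$ indeed absorbs the loss of one dimension) rules out $(\delta(G)-1)$-rigidity and hence global $(\delta(G)-1)$-rigidity. You are right that Theorem \ref{thm:delta} cannot be cited as a black box for the $0$-statement, and your identification of the divergent gap $\delta(G)-np/2$ as the needed extra input matches exactly what the paper's proof of Theorem \ref{thm:delta} supplies.
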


\begin{theorem}
If $~C_*\log n/n \le p= o(n^{-1/2})$ and $G\sim G(n,p)$ then, for every fixed $1/2>\varepsilon>0$ and integers $d=d(n)$,
\[
\P(G\mbox{ is globally $d$-rigid}) \to
\left\{ 
\begin{matrix}
    1&d<(1/2-\varepsilon)np\vspace{0.2cm} \\ 
    0&d>(1/2+\varepsilon)np
\end{matrix}
\right.,
\]
as $n\to\infty$.
\end{theorem}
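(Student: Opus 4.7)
The plan is to deduce this theorem directly from Theorem~\ref{thm:np/2} by invoking the two classical facts about generic global rigidity recalled just before the statement: the Hendrickson necessary condition that any globally $d$-rigid graph has minimum degree at least $d+1$, and Jord\'an's sufficient condition that every $(d+1)$-rigid graph is globally $d$-rigid. No new probabilistic estimate should be needed.

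For the flexibility direction, suppose $d>(1/2+\varepsilon)np$. Since global $d$-rigidity is formally stronger than $d$-rigidity (an embedding that can be continuously deformed while preserving edge lengths certainly fails the global rigidity definition, because any sufficiently small non-isometric deformation gives a non-congruent framework with the same edge lengths), it suffices to show that $G$ is a.a.s.\ not $d$-rigid, which is exactly the content of the second assertion of Theorem~\ref{thm:np/2}. Alternatively, one could invoke Hendrickson and argue that $\delta(G)\ge d+1>(1/2+\varepsilon)np+1$ fails a.a.s., but this requires a separate concentration estimate for $\delta(G)$; the implication ``global rigidity $\Rightarrow$ rigidity'' is the cleaner route.

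For the rigidity direction, suppose $d<(1/2-\varepsilon)np$. Because $np\ge C_*\log n\to\infty$, for all sufficiently large $n$ we have $d+1<(1/2-\varepsilon/2)np$. Applying Theorem~\ref{thm:np/2} with dimension $d+1$ and parameter $\varepsilon/2$, the graph $G$ is a.a.s.\ $(d+1)$-rigid. Jord\'an's theorem then upgrades this to global $d$-rigidity.

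Since both steps are essentially citations plus a negligible shift $d\mapsto d+1$, I do not expect any real obstacles. The only point requiring mild care is the verification that the parameter shift preserves the hypotheses, i.e.\ that $d+1<(1/2-\varepsilon/2)np$ eventually holds whenever $d<(1/2-\varepsilon)np$; this uses $(\varepsilon/2)np\to\infty$, which follows from $p\ge C_*\log n/n$.
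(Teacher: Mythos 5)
Your proposal is correct and matches the paper's intended derivation: the paper states this theorem as an immediate consequence of Theorem~\ref{thm:np/2} together with Jord\'an's theorem (for the $1$-direction, after the harmless shift $d\mapsto d+1$, which is absorbed since $(\varepsilon/2)np\to\infty$) and the necessity of ordinary rigidity (equivalently, the edge-count/minimum-degree obstruction) for the $0$-direction. Nothing further is needed.
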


The remainder of the paper is organized as follows. In Section \ref{sec:prel} we present some necessary preliminaries in rigidity and random graphs. Afterwards, in Section \ref{sec:proofs} we prove the main theorems. We conclude with open problems in Section \ref{sec:open}.
\section{Preliminaries}\label{sec:prel}
\subsection{Generic infinitesimal rigidity}\label{sub:rig}

A $d$-dimensional framework is a pair $(G,\p)$ consisting of a graph $G=(V,E)$ and an embedding $\p:V\to\R^d$. Throughout this paper we assume that $|V|>d$ and that $\p$ is generic, i.e., the $d|V|$ coordinates of $\p$ are algebraically independent over the reals. The $|E|\times d|V|$  rigidity matrix $R=R(G,\p)$ of the framework is the total derivative of the function
\[
\R^{d|V|}\to\R^{|E|}\,\,,\quad \q\mapsto \left(\frac 12\|\q(x)-\q(y)\|_2^2\right)_{xy\in E}
\]
at point $\q=\p$. Here we view a point $\q\in\R^{d|V|}$ as a function from $V\to\mathbb R^d$. More directly, the row $\bv_{xy}$ in $R$ indexed by $xy\in E$ contains the vector $(\p(x)-\p(y))^T$ in the $d$ coordinates corresponding to $x$, the vector $(\p(y)-\p(x))^T$ in the $d$ coordinates corresponding to $y$, and $0$ elsewhere. It turns out that every isometry of $\mathbb R^d$ yields a vector in the right kernel of $R$. Moreover, under the assumptions that  $|V|>d$ and $\p$ is generic, these vectors span a subspace of the kernel of dimension $\binom {d+1}2$ whence $\dim\ker R\ge\binom {d+1}2$. A graph $G$ is called $d$-rigid if this bound is attained and $\rank R=d|V|-\binom{d+1}2$. Asimow and Roth showed that if $\p$ is generic, this notion is a well-defined graph property, namely it is independent of the choice of the generic embedding $\p$ ~\cite{AR78,AR79}. If $|V|\le d$ then the rank of the rigidity matrix is $\binom {|V|}2$.

The $n$-vertex generic $d$-rigidity matroid is the linear matroid whose ground set consists of the edges of the complete graph $K_n$ and a subset $E$ is independent if the rows corresponding to $E$ are linearly independent in the rigidity matrix of $K_n$ with some generic embedding $\p$. The notion of {\em closure} from matroid theory is central in our work so we recall its definition. Recall that $\bv_e$ denotes the row corresponding to an edge $e$ in the rigidity matrix. 
\begin{definition}\label{def:closure}
Let $d\ge 1$ an integer, $G$ be an $n$-vertex graph, and $\p:[n]\to\R^d$ generic. The $d$-rigidity closure of $G$ is the $n$-vertex graph whose edge set is
\[
C_d(G) = 
\left\{
f \in \binom{[n]}2~:~ \bv_{f}\in\mathrm{span}_\R(\bv_e~:~e\in G).
\right\}
\]
A graph is called closed in the $d$-rigidity matroid if $G=C_d(G)$.
\end{definition}
Clearly, $G\subset C_d(G)$, and a graph is $d$-rigid if and only if $C_d(G)$ is the complete graph. In addition, $C_d(G)\supseteq C_{d'}(G)$ if $d<d'$ since the $d$-rigidity matrix is a submatrix of the $d'$-rigidity matrix.
\subsubsection{The structure of closed graphs}
This subsection contains a number of structural claims on closed graphs that are needed in our argument. Most notably, a critical part of our argument builds upon the following key lemma from the recent proof of the Lov\'asz-Yemini conjecture by Vill\'anyi~\cite{villain}. While the lemma is not explicitly stated in the paper in the exact form we require, its underlying idea is fully present.

\begin{lemma}\label{lem:vil}
Let $d\ge 1$ be an integer, and $G$ a closed graph in the $d$-rigidity matroid with minimal degree at least $d(d+1).$ Then, there is a vertex in $G$ whose neighbors induce a clique.
\end{lemma}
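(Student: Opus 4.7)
The proof follows the structural insight developed by Villányi in \cite{villain}. The starting observation is a fundamental property of closed graphs: if $G = C_d(G)$ and $H \subseteq G$ is any $d$-rigid subgraph with $|V(H)| \ge d+1$, then $V(H)$ induces a clique in $G$. Indeed, $d$-rigidity of $H$ gives $C_d(H) = K_{V(H)}$, and monotonicity of closure yields $C_d(H) \subseteq C_d(G) = G$, so every potential edge within $V(H)$ is already present in $G$. Consequently, it suffices to find a vertex $v$ for which $G[N(v)]$ contains a $d$-rigid subgraph spanning all of $N(v)$, since then $N(v)$ is forced to be a clique.

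The plan is to run an extremal argument based on the classical $0$-extension in the $d$-rigidity matroid: attaching a new vertex to a $d$-rigid graph via $d$ edges preserves $d$-rigidity. Fix a vertex $v$ and let $K \subseteq N(v)$ be a maximum clique contained in $N(v)$. Since $K \cup \{v\}$ is a clique of size at least $d+1$ (which one needs to verify using the minimum degree bound), it is $d$-rigid, and so is $G[K]$. For any $u \in N(v) \setminus K$, the number of neighbors of $u$ inside $K$ must be at most $d-1$; otherwise the complete graph on $K$ together with $d$ of the $uK$-edges would be a $d$-rigid subgraph on $K \cup \{u\}$, and the key principle above would force $K \cup \{u\}$ to be a clique, contradicting the maximality of $K$ inside $N(v)$.

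The final and most delicate step is to combine these local constraints with the minimum-degree hypothesis $\delta(G) \ge d(d+1)$ to conclude that $K = N(v)$ for some vertex $v$. A natural route is to choose $v$ maximizing $|K|$ across all vertices and to derive a contradiction from the existence of a ``bad'' neighbor $u \in N(v) \setminus K$ by carefully analyzing the maximum clique $K_u \subseteq N(u)$ that $u$ itself produces and the edges between $K$ and $K_u$, together with the fact that $u$ has many neighbors outside $K$ forced by its degree bound. Orchestrating this case analysis into a genuine contradiction is the main obstacle of the argument; this is precisely where Villányi's combinatorial insight in \cite{villain} plays its essential role, and where the numeric threshold $d(d+1)$ appears naturally as the minimum degree ensuring that every neighborhood has enough ``room'' to support the $0$-extensions that drive the extremal step.
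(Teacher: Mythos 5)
Your setup is sound and parallels the paper's: the observation that a $d$-rigid subgraph (on more than $d$ vertices) of a closed graph spans a clique, and the consequent local constraint that a vertex $u\in N(v)\setminus K$ outside a maximal clique $K\subseteq N(v)$ has at most $d-1$ neighbors in $K$ --- equivalently, two distinct maximal cliques in $G[N(v)]$ meet in at most $d-2$ vertices, which is the form the paper records. One small issue already here: your $0$-extension step needs $|K|\ge d$, and the hypothesis $\delta(G)\ge d(d+1)$ does not by itself guarantee that $N(v)$ contains a clique of size $d$, so the parenthetical ``which one needs to verify using the minimum degree bound'' is asserting something that is neither verified nor obviously true.

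The decisive problem is that the entire content of the lemma is the step you defer. You state that orchestrating the case analysis into a contradiction ``is the main obstacle'' and that this is ``precisely where Vill\'anyi's combinatorial insight plays its essential role,'' but you do not supply that argument; moreover, the extremal route you sketch (choose $v$ maximizing $|K|$ and compare $K$ with $K_u$) is not Vill\'anyi's argument and is not shown to terminate in a contradiction. The mechanism the paper actually invokes (Lemmas 3.1 and 3.2 of \cite{villain}) is probabilistic: assuming no neighborhood induces a clique, one constructs a random subgraph $G_\pi$ from a random vertex ordering which, using exactly the ``maximal cliques in a neighborhood meet in at most $d-2$ vertices'' property, is with positive probability independent in the $d$-rigidity matroid and yet has at least $d|V(G)|$ edges --- impossible, since the matroid on $V(G)$ has rank less than $d|V(G)|$. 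The bound $d(d+1)$ on the minimum degree is what makes the expected edge count of $G_\pi$ large enough. As written, your proposal establishes the correct preliminary reductions but leaves the theorem unproved.
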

\begin{proof}
Suppose that $G$ is a closed graph in the $d$-rigidity matroid, and let $v$ be a vertex. Note that any two distinct maximal cliques $H_1,H_2$ in the subgraph induced by $v$'s neighbors intersect in at most $d-2$ vertices. Indeed, otherwise $H_1\cup \{v\}$ and $H_2\cup \{v\}$ are two clique in $G$ that intersect in at least $d$ neighbors, whence $H_1\cup H_2\cup\{v\}$ is $d$-rigid. Since $G$ is closed, $H_1\cup H_2\cup\{v\}$ induces a clique in $G$, in contradiction to $H_1,H_2$ being maximal distinct cliques. Therefore, by Lemmas 3.1 and 3.2 in ~\cite{villain} either there is a vertex in $G$ whose neighbors induce a clique, or the random subgraph $G_\pi$ constructed by Vill\'anyi has, with positive probability, the contradictory property of being both independent in the $d$-rigidity matroid and having at least $d|V(G)|$ edges.
\end{proof}

The following fact follows directly from the fact that $K_{d+2}$ minus an edge is $d$-rigid.

\begin{fact}\label{fct:heavy_vertex}
Let $d\ge 1$ be an integer, and $G$ a closed graph in the $d$-rigidity matroid. Suppose that $A$ induces a clique in $G$, and $v\notin A$ is adjacent to at least $d$ vertices from $A$. Then, $A\cup\{v\}$ also induces a clique in $G$.
\end{fact}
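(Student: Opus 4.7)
The plan is to argue by contradiction. Suppose $A \cup \{v\}$ does not induce a clique in $G$; then there exists $u \in A$ with $uv \notin G = C_d(G)$. Let $N \subseteq A$ be a set of $d$ neighbors of $v$; note that $u \notin N$ since $u$ is not adjacent to $v$. Consider the vertex set $W = N \cup \{u,v\}$, which has exactly $d+2$ elements. The subgraph $H$ of $G$ induced on $W$ contains every edge within $N \cup \{u\}$ (because $A$ is a clique in $G$) and every edge from $v$ to $N$ (by choice of $N$), so $H$ contains $K_{d+2}$ minus (at most) the single edge $uv$.

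Since $K_{d+2}$ minus an edge is $d$-rigid, the rows of the rigidity matrix indexed by the edges of $H$ span the same subspace (when restricted to the columns indexed by $W$) as the rows indexed by the edges of $K_W$. As the rows indexed by edges inside $W$ are supported only on columns of $W$, this yields that $\bv_{uv} \in \mathrm{span}_\R(\bv_e : e \in H) \subseteq \mathrm{span}_\R(\bv_e : e \in G)$. Hence $uv \in C_d(G) = G$, contradicting our assumption. Therefore every $u \in A$ is adjacent to $v$ in $G$, so $A \cup \{v\}$ induces a clique as required.
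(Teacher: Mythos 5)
Your proof is correct and follows exactly the route the paper intends: the paper justifies this fact in one line by citing that $K_{d+2}$ minus an edge is $d$-rigid, and your argument is a careful expansion of that same idea (applying it to $N\cup\{u,v\}$ for each potential non-neighbor $u$ and using closedness to conclude $uv\in G$). No gaps.
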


For the proof of the next claim, that was suggested to us by Orit Raz, we recall the notion of Henneberg steps. Given a graph $G$ and a dimension $d$, a Henneberg $0$-step is the addition of a new vertex connected to $d$ vertices of $G$, and a Henneberg $1$-step is the addition of a new vertex connected to $d+1$ vertices $x_1,....,x_{d+1}$ of $G$, where $x_1,x_2$ are adjacent in $G$, followed by the removal of the edge $x_1x_2.$ It is known that these steps preserve $d$-rigidity.
\begin{claim}\label{clm:matching}
Let $d\ge 1$ be an integer, and $G$ a closed graph in the $d$-rigidity matroid. Suppose that $A$,$B$ are disjoint vertex subsets that induce a clique in $G$, and that there exists a matching of $\binom {d+1}2$ edges between $A$ and $B$ in $G$. Then, $A\cup B$ induces a clique in $G$. 
\end{claim}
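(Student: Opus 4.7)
My plan is to induct on $|A|+|B|$, reducing to the base case $|A|=|B|=k:=\binom{d+1}{2}$ where the matching is perfect, and then to prove the core rigidity statement for this minimal configuration.

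\textbf{Inductive reduction.} Suppose $|A|>k$; the case $|B|>k$ is symmetric. Pick $v\in A$ not incident to any matching edge (one exists since the matching uses only $k$ vertices of $A$). Then $A\setminus\{v\}$ and $B$ still induce cliques in $G$ with a matching of size $k$ between them, so by the inductive hypothesis $(A\setminus\{v\})\cup B$ induces a clique in $G$. Since $A$ is a clique, $v$ is adjacent to all of $A\setminus\{v\}$, hence has at least $|A|-1\geq k\geq d$ neighbors in the clique $(A\setminus\{v\})\cup B$, and Fact~\ref{fct:heavy_vertex} yields that $A\cup B$ induces a clique.

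\textbf{Base case.} With $|A|=|B|=k$, let $H_0$ denote the spanning subgraph of $G$ on $A\cup B$ consisting of $K_A$, $K_B$, and the perfect matching $M$. It suffices to show that $H_0$ is $d$-rigid, since then $C_d(H_0)=K_{A\cup B}$, and because $H_0\subseteq G$ with $G$ closed, this would give $K_{A\cup B}\subseteq G$. I would prove the $d$-rigidity of $H_0$ via a Henneberg construction: begin with $K_A$ (which is $d$-rigid as $|A|\geq d+1$, the case $d=1$ being trivial) and then attach the vertices of $B$ one at a time using a carefully-ordered sequence of Henneberg $0$- and $1$-steps, each of which preserves $d$-rigidity, arranged so that the final graph is exactly $H_0$.

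\textbf{Main obstacle.} The principal difficulty in the Henneberg construction is that each $b_i\in B$ has degree $k=\binom{d+1}{2}$ in $H_0$, which strictly exceeds $d+1$ once $d\geq 3$, so no single Henneberg step can realize $b_i$'s full $H_0$-neighborhood at once. The construction must therefore insert $b_i$ with only some of its intended neighbors together with some auxiliary edges, and later perform Henneberg $1$-steps on other already-inserted vertices that swap those auxiliary edges for the remaining matching and $K_B$-edges of $H_0$; orchestrating this sequence carefully is the delicate combinatorial task. A cleaner alternative that bypasses the explicit Henneberg bookkeeping is to prove the $d$-rigidity of $H_0$ directly by a dimension count: any infinitesimal motion of $H_0$ restricts to a rigid motion of $A$ and a rigid motion of $B$, giving a $2\binom{d+1}{2}$-dimensional candidate space, and the $\binom{d+1}{2}$ matching edges impose $\binom{d+1}{2}$ linearly independent constraints (which reduces to the non-vanishing of a $\binom{d+1}{2}\times\binom{d+1}{2}$ determinant in the coordinates of the matching endpoints, verified on a single explicit configuration), leaving precisely the $\binom{d+1}{2}$-dimensional space of common rigid motions of $A\cup B$.
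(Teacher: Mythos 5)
Your reduction to the case $|A|=|B|=\binom{d+1}{2}$ via Fact~\ref{fct:heavy_vertex} is correct, and you have correctly identified the heart of the matter: showing that two $\binom{d+1}{2}$-cliques joined by a perfect matching form a $d$-rigid graph. This is exactly the shape of the paper's argument, which constructs a $d$-rigid subgraph $H$ of $G[A\cup B]$ on $d(d+1)$ vertices whose cross edges are a perfect matching. The problem is that you do not actually prove the base case by either of your two routes. For the Henneberg route you name the obstacle (each vertex of $B$ has degree $\binom{d+1}{2}>d+1$ in $H_0$, so its neighborhood cannot be realized by a single step) and then declare the orchestration ``delicate'' without doing it --- but that orchestration \emph{is} the proof. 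The paper's solution is concrete: start from $K_{d+1}$ on $x_1,\dots,x_d,y_1$, attach $y_2,\dots,y_d$ by $0$-steps so that $x_1,\dots,x_d$ and $y_1,\dots,y_d$ are cliques joined by the matching $x_iy_i$ plus $\binom{d+1}{2}-d$ surplus cross edges, and then eliminate each surplus edge $x_iy_j$ by a pair of $1$-steps that introduce a fresh matched pair $x_k,y_k$ (the first $1$-step deletes $x_iy_j$, the second deletes the intermediate edge $x_ky_j$), leaving only a perfect matching across. Note that the target graph need not be your $H_0$; any $d$-rigid subgraph of $G[A\cup B]$ whose closure is complete suffices, which is what makes this bookkeeping feasible.

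Your ``cleaner alternative'' has the same defect in a different place: the statement that the $\binom{d+1}{2}$ matching constraints are linearly independent on the $2\binom{d+1}{2}$-dimensional space of pairs of infinitesimal isometries is precisely the content of the claim, and asserting that it ``reduces to the non-vanishing of a determinant, verified on a single explicit configuration'' without exhibiting the configuration or the verification leaves the proof empty at its critical point. The route is salvageable: writing an infinitesimal isometry as $q\mapsto Sq+t$ with $S$ skew-symmetric, the constraint from the matching edge $a_ib_i$ (after killing the $B$-side isometry) is the functional $(S,t)\mapsto\langle u_i,\,Sx_i+t\rangle$ with $x_i=\p(a_i)$, $u_i=\p(a_i)-\p(b_i)$, i.e.\ the element $u_i\wedge x_i\oplus u_i$ of $\Lambda^2\R^d\oplus\R^d$; such elements span that $\binom{d+1}{2}$-dimensional space as $(u,x)$ vary (take $x=0$ to get all of $0\oplus\R^d$, then subtract to get all $u\wedge x$), so $\binom{d+1}{2}$ generic ones are independent, and independence propagates from one configuration to all generic ones. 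Some such argument must appear explicitly for the proof to stand.
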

\begin{proof}
    It suffices to construct a $d$-rigid graph $H$ on  $d(d+1)$ vertices, in which the edges between the first $\binom {d+1}2$ and the last $\binom {d+1}2$ vertices form a perfect matching. Indeed, the induced subgraph of $G$ on $A\cup B$ contains a copy of $H$ and since $G$ is closed and $H$ is $d$-rigid the claim follows.

    To construct $H$, we start with the complete graph on the vertices $x_1,...,x_d,y_1$. Then, we perform $d-1$ $0$-steps, where in the $i$-th step, $2\le i \le d$, we connect the new vertex $y_i$ to $y_1,...,y_{i-1},x_i,...,x_d$. Note that besides the two cliques induced on $x_1,...,x_d$ and $y_1,...,y_d$, this $d$-rigid graph contains the matching $x_iy_i,~1\le i \le d$ as well as $\binom {d+1}2-d$ additional edges $e_{d+1},...,e_{\binom {d+1}2}$. 

    We would like to remove theses additional edges and replace them by new vertices that form a matching. So, for each such additional edge $e_k$ between $x_iy_j,~i\ne j$, we (i) add a new vertex $x_k$, connect it to $y_j,x_1,...,x_d$ and remove the edge $x_iy_j$, and afterwards (ii) add a new vertex $y_k$, connect it to $x_k,y_1,...,y_d$ and remove the edge $x_ky_j$. Note that these two actions are $1$-steps hence they preserve $d$-rigidity. In addition, once all these steps are carried out, the only edges between $x_1,....,x_{\binom{d+1}2}$ and $y_1,....,y_{\binom{d+1}2}$ are those of the perfect matching $x_ky_k~,1\le k \le \binom {d+1}2$.
\end{proof}

\subsubsection{Projections of the rigidity matrix}
Let $A\subseteq [n]$ be a vertex subset and denote by $W_A$ the orthogonal complement in $\R^{dn}$ of $V_A:=\mathrm{span}_\R\left(\bv_e~:~e\in \binom A2\right)$. In addition, denote the corresponding orthogonal projection by $P_{W_A}:\R^{dn}\to W_A$. Note that since $\dim V_A\ge d|A|-\binom{d+1}2$ (equality holds if $|A|\ge d$) then,
\begin{equation}\label{eq:dim_A}
    \dim \mathrm{span}_\R \left(P_{W_A}\bv_e~:~e\in \binom {[n]}2\setminus \binom A2\right) \le d(n-|A|).
\end{equation}

We will consider the closure in the associated linear matroid, 
\[
C_{d,A}(G) :=
\left\{
f \in \binom{[n]}2\setminus \binom A2~:~ P_{W_A}\bv_{f}\in\mathrm{span}_\R(P_{W_A}\bv_e~:~e\in G),
\right\}.
\]
and use it via the following easy claim.
\begin{claim}\label{clm:CdA}
If $G$ is an $n$-vertex graph and $A\subset [n]$ induces a clique in $C_d(G)$ then $C_{d,A}(G)\subset C_d(G)$.
\end{claim}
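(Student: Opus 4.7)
The claim is really a direct unwinding of the two closure definitions, so the plan is to translate both sides into concrete linear-algebra statements about the row vectors $\bv_e$ living in $\R^{dn}$, and then chain them together.

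First I would fix an arbitrary edge $f \in C_{d,A}(G)$ and expand what this means by definition: there exist scalars $\alpha_e$ indexed by $e \in G$ such that
\[
P_{W_A}\bv_f \;=\; \sum_{e\in G} \alpha_e\, P_{W_A}\bv_e.
\]
Since $P_{W_A}$ is the orthogonal projection onto $W_A$ along $V_A$, this identity is equivalent to saying that the vector $\bv_f - \sum_{e \in G}\alpha_e\,\bv_e$ lies in the kernel of $P_{W_A}$, which is exactly $V_A = \mathrm{span}_\R(\bv_e : e\in\binom{A}{2})$. Hence we can write
\[
\bv_f \;=\; \sum_{e\in G}\alpha_e\,\bv_e \;+\; \sum_{e\in\binom{A}{2}}\beta_e\,\bv_e
\]
for some real coefficients $\beta_e$.

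Next I would invoke the hypothesis that $A$ induces a clique in $C_d(G)$. By Definition \ref{def:closure}, for every $e \in \binom{A}{2}$ the row $\bv_e$ already lies in $\mathrm{span}_\R(\bv_{e'} : e'\in G)$. Substituting these expansions into the displayed equation expresses $\bv_f$ as an $\R$-linear combination of $\{\bv_{e'} : e' \in G\}$, which means precisely that $f \in C_d(G)$.

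Since there is nothing subtle to overcome here, I do not anticipate a genuine obstacle; the only bookkeeping point worth emphasising is the exact identification $\ker P_{W_A} = V_A$, which is what lets us lift a relation that holds modulo $V_A$ to a genuine linear relation inside $\R^{dn}$. All the ``rigidity content'' of the claim is bundled into the hypothesis that $\binom{A}{2} \subseteq C_d(G)$; the projection machinery itself contributes nothing beyond this straightforward change of viewpoint.
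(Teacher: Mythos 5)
Your proof is correct and takes essentially the same route as the paper's: both decompose $\bv_f$ along the orthogonal splitting $\R^{dn}=V_A\oplus W_A$, observe that $\ker P_{W_A}=V_A$, and use the hypothesis that $A$ is a clique in $C_d(G)$ to absorb the $V_A$-component into $\mathrm{span}_\R(\bv_e : e\in G)$. No gaps.
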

\begin{proof} 
For every edge $f$ in $C_{d,A}(G)$, the vector $P_{W_A}\bv_f$ is spanned by $P_{W_A}\bv_e,\,e\in G$. On the other hand, $P_{V_A}\bv_f$ clearly belongs to $V_A$, and the latter is spanned by $P_{V_A}\bv_e,\,e\in G$ since $A$ induces a clique in $C_d(G)$. The claim follows from the fact that $V_A$ and $W_A$ are orthogonal complements.
\end{proof}

We note that this is a standard construction in matroid theory. Namely, the linear matroid that corresponds to $P_{W_A}\bv_e~:~r\in\binom{[n]}2\setminus\binom A2$ is the matroid obtained from the generic $d$-rigidity matroid by {\em contracting} all the edges in $\binom A2$.

\subsection{Chernoff's inequalities}
We make substantial use of Chernoff's inequalities in this work. Mostly, we use the following version that is asymptotically sharp in the parameter regime we are interested in. 

\begin{fact}
If $Y\sim\mathrm{Bin}(n,p)$ and $0<\alpha<1$ then 
\begin{equation}\label{eq:cher_main}
    \P(Y\le \alpha\E Y) \le 
    \left(\frac{e^{-(1-\alpha)}}{\alpha^\alpha}\right)^{\E Y}.
\end{equation}

\end{fact}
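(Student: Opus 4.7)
The plan is to use the standard exponential moment (Chernoff) method, applied with a negative exponent because we are bounding a lower tail. I would start by fixing $t>0$ and writing
\[
\P(Y\le \alpha\E Y)=\P\bigl(e^{-tY}\ge e^{-t\alpha\E Y}\bigr)\le e^{t\alpha\E Y}\,\E[e^{-tY}],
\]
by Markov's inequality. Since $Y$ is a sum of $n$ independent Bernoulli$(p)$ variables, the moment generating function factorizes and I would bound it by
\[
\E[e^{-tY}]=\bigl(1-p+pe^{-t}\bigr)^n=\bigl(1-p(1-e^{-t})\bigr)^n\le \exp\bigl(-np(1-e^{-t})\bigr),
\]
using $1-x\le e^{-x}$. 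Writing $\mu:=\E Y=np$, this reduces the task to proving
\[
t\alpha\mu-\mu(1-e^{-t})\le \mu\bigl(-\alpha\log\alpha+\alpha-1\bigr)
\]
for an appropriate choice of $t$.

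Next I would optimize the exponent $f(t):=t\alpha-(1-e^{-t})$ over $t>0$. Differentiating gives $f'(t)=\alpha-e^{-t}$, which vanishes at $t^*=\log(1/\alpha)>0$ (this is where the assumption $0<\alpha<1$ enters to ensure $t^*>0$), and $f''(t^*)=e^{-t^*}=\alpha>0$, so $t^*$ is a genuine minimum of $f$ (hence minimizer of the upper bound on the probability). Substituting $e^{-t^*}=\alpha$ yields
\[
f(t^*)=\alpha\log(1/\alpha)-(1-\alpha)=-\alpha\log\alpha-(1-\alpha).
\]

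Finally, exponentiating gives
\[
\P(Y\le\alpha\mu)\le \exp\bigl(\mu\,f(t^*)\bigr)=\exp\bigl(-\mu\alpha\log\alpha-\mu(1-\alpha)\bigr)=\left(\frac{e^{-(1-\alpha)}}{\alpha^\alpha}\right)^{\mu},
\]
which is the claimed bound. There is no real obstacle here: the only step that requires any care is the optimization, and one should check that $t^*>0$ so that Markov's inequality was applied with the correct sign; both are immediate from $\alpha\in(0,1)$. The bound $1-x\le e^{-x}$ is what makes the estimate clean and asymptotically sharp in the regime where $\mu\to\infty$ and $\alpha$ is bounded away from $1$, which is exactly the regime used later in the paper.
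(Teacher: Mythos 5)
Your proof is correct and complete: it is the standard exponential-moment (Chernoff) argument with the optimal choice $t^*=\log(1/\alpha)$, and the algebra checks out, yielding exactly the stated bound $\left(e^{-(1-\alpha)}/\alpha^{\alpha}\right)^{\E Y}$. The paper states this inequality as a known Fact without proof, and your derivation is precisely the standard one the authors are implicitly invoking.
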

We will also use the following simpler version of this inequality:
\begin{equation}\label{eq:cher_non_sharp}
    \P(Y\le \alpha \E Y) \le 
    e^{-\frac{(1-\alpha)^2}{2}\E Y}.
\end{equation}

For upper tail bounds, we will only need that for every $t>\E Y$,
\begin{equation}\label{eq:cher_non_sharp_upper}
    \P(Y\ge t) \le 
    \left(\frac{e\E Y}{t} \right)^t.
\end{equation}

\subsection{Minimum degree in random graphs}\label{sub:min}

The main purpose of this subsection is to study the minimum degree of a $G(n,p)$ random graph in the regime $p=c\log n/n$ where $c>0$ is a constant. This is a well-known topic in random graph theory~\cite[Exercise 3]{bollobas1998random} and we include it here for completeness.
\medskip

For a real number $c> 1$, define the function
\begin{equation}\label{eq:phi}
    \varphi_c(t) := 1-c+t-t\log(t/c).
\end{equation}
and denote by $a=a(c)$ the smallest non-negative root of $\varphi_c$. Note that $0<a<c$ since $\varphi_c(t)$ approaches $1-c<0$ as $t\to 0$ and $\varphi_c(c)=1$.

Roughly speaking, the number of vertices of degree $t\log n$ in $G(n,c\log n/n)$ is typically $n^{\varphi_c(t)+o(1)}.$ The following claim is an important special case.

\begin{claim}
\label{clm: min_deg_small_p}
Suppose that $p=p_n$ are real numbers such that the sequence $c_n:=(n-1)p/\log n$ is bounded and satisfies $c_n\ge 1$ for every $n$. If $G \sim G(n,p)$ then, for every $\varepsilon>0$,
\[
\P(|\delta(G) - a(c_n)\log n| \le \varepsilon\log n)\to 1
\]
as $n\to\infty$.
\end{claim}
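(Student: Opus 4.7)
The plan is to apply the first and second moment methods to the count of low-degree vertices. Let $d(v)\sim\bin(n-1,p)$ denote the degree of $v$, so $\E d(v)=c_n\log n$, and for each $k$ let $X_k$ be the number of vertices with $d(v)\le k$. The essential estimate, valid uniformly for $t$ in any compact subinterval of $[0,c_n)$, is
\[
\P(d(v)\le t\log n)=n^{\varphi_{c_n}(t)-1+o(1)}.
\]
The upper bound follows from the Chernoff inequality \eqref{eq:cher_main} with $\alpha=t/c_n$: a direct computation gives exponent $\varphi_{c_n}(t)-1$. The matching lower bound is obtained by retaining just the single summand $\P(d(v)=\lfloor t\log n\rfloor)$ and applying Stirling's approximation $\log\binom{n-1}{k}=k\log(n/k)+k+o(\log n)$ valid for $k=\Theta(\log n)$, together with $\log(1-p)^{n-1-k}=-(n-1-k)p+O(np^2)$.

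For the lower bound $\delta(G)>(a(c_n)-\varepsilon)\log n$, I would take $t=a(c_n)-\varepsilon/2$. Since $\varphi_{c_n}$ is strictly increasing on $(0,c_n)$ (its derivative being $\log(c_n/s)>0$), the quantity $\varphi_{c_n}(t)$ is strictly negative and bounded away from zero uniformly in $c_n$ belonging to a compact subinterval of $(1,\infty)$. Hence $\E X_{t\log n}=n^{\varphi_{c_n}(t)+o(1)}\to 0$, and Markov's inequality yields $\delta(G)>t\log n$ a.a.s.

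For the matching upper bound $\delta(G)<(a(c_n)+\varepsilon)\log n$, we take $t=a(c_n)+\varepsilon/2$, so $\varphi_{c_n}(t)>0$ and $\E X_{t\log n}\to\infty$. The second moment method now suffices: conditioning on whether $uv$ is an edge and using independence of the remaining edge indicators gives, after a short computation with $q_i:=\P(\bin(n-2,p)\le k-i)$,
\[
\cov(\mathbf{1}_{d(u)\le k},\mathbf{1}_{d(v)\le k})=p(1-p)(q_0-q_1)^2=p(1-p)\,\P(\bin(n-2,p)=k)^{2}\le (1+o(1))\,p\,\P(d(v)\le k)^{2}.
\]
Summing yields $\var(X_{t\log n})\le\E X_{t\log n}+p(\E X_{t\log n})^{2}=o((\E X_{t\log n})^{2})$ as $p\to 0$, so Chebyshev produces a vertex of degree at most $t\log n$ a.a.s.

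The principal technical difficulty is ensuring the pointwise estimate is uniform in $c_n$: the $o(1)$ in Stirling's formula, and the matching Chernoff-type lower bound, must have implicit constants depending only on the compact range $c_n\in[1,C]$. The boundary behavior $c_n\to 1$, where $a(c_n)\to 0$, requires no new idea but some care; one replaces $a(c_n)-\varepsilon/2$ with $\max\{0,a(c_n)-\varepsilon/2\}$ and appeals to the joint continuity of $(c,t)\mapsto\varphi_c(t)$ to extract the required uniform gap.
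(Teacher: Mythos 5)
Your proposal is correct and follows essentially the same route as the paper: a Chernoff first-moment bound gives $\delta(G)>(a(c_n)-\varepsilon)\log n$, and a Stirling point-probability estimate combined with a second-moment/Chebyshev argument gives $\delta(G)\le(a(c_n)+\varepsilon)\log n$. The only (harmless) differences are that you apply the second moment to the count of vertices of degree at most $k$ rather than exactly $k$, and you make the covariance bound explicit via conditioning on the edge $uv$, where the paper simply asserts $\P(D_x=k,D_y=k)=(1+o(1))\P(Y=k)^2$ by direct computation.
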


\begin{proof}
The proof is based on straightforward applications of the first and second moment methods. 
Let $\varepsilon>0$. Let $0<t<c=c_n$ be a real number, and $X$ the number of vertices of degree at most $t\log n$ in $G$. The degree of a vertex in $G$ is $\mathrm{Bin}(n-1,p)$-distributed, therefore By Chernoff's inequality \eqref{eq:cher_main},
\[
\P_{\mathrm{Bin}(n-1,p)}(Y \le t\log n) \le \left(\frac{e^{-(1-t/c)}}{(t/c)^{t/c}}\right)^{c\log n}=n^{\varphi_c(t)-1}.
\]
In conclusion, using linearity of expectation, we find that 
\[
\E[X]\le n^{\varphi_c(t)}.
\]
 The lower bound follows directly since $t<a(c)-\varepsilon$ implies that $\E[X]\to 0$ as $n\to\infty$, and consequently $\delta(G)>t\log n$ a.a.s.\, as claimed.

On the other hand, if $k:=\lfloor t\log n\rfloor=(1+o(1))t\log n$ then by Stirling's approximation we find that
\begin{align*}
\P(Y=k)=&~\binom{n-1}kp^k(1-p)^{n-1-k}    \\
\ge&~\left((1+o(1))\frac{e(n-1)p}{k}\right)^ke^{-(p+p^2)(n-1-k)}\\
=&~n^{\varphi_c(t)-1+o(1)},
\end{align*}
by a straightforward computation using $k=(1+o(1))t\log n$ and $(n-1)p=c\log n$. Therefore, the expectation of the number $Z$ of vertices of degree $k$ is at least
$$
\E[Z]\ge n^{\varphi_c(t)+o(1)}\to\infty,
$$
assuming $t>a(c)+\varepsilon$.

In addition, if $D_x,D_y$ are the degrees of two vertices in $G$ then a direct computation shows that
\[
\P(D_x=k,D_y=k)=(1+o(1))\P(Y=k)^2,
\]
and consequently $\var (Z)= o(\E[Z]^2)$. We obtain that a.a.s. $Z>0$ by Chebyshev's inequality, and therefore $\delta(G) \le k\le (a(c)+\varepsilon)\log n$ as claimed.

\end{proof}

Using this claim, we can easily identify the critical probability at which the minimum-degree bottleneck for the rigidity of $G(n,p)$ becomes less restrictive than the bottleneck induced by the number of edges. Specifically, the edge-number bottleneck asserts that the typical $d$-rigidity of $G(n, c \log n / n)$ requires $d < (1 + o(1)) c \log n / 2$. Let $C_*$ denote the value of $c$ such that $a = c/2$, i.e.,
\[
\varphi_{C_*}(C_*/2)=1-C_*+(C_*/2)-C_*/2\log (1/2)=0,
\]
Solving this yields $C_* = 2 / (1 - \log 2) \approx 6.518$. Consequently, we find that $a < c/2$ if and only if $c < C_*$ (see Figure \ref{fig}).

\subsubsection{Matchings in random graphs}
We also include here the following  claim that is later used to bootstrap disjoint cliques into a larger clique in the closure of $G(n,p)$
\begin{claim}\label{clm:Gnp_matching}
Fix $\delta >0$. If $p=\omega(n^{-1})$ and $G\sim G(n,p)$, then, a.a.s., between every two disjoint vertex sets of sizes at least $\delta n$ there is a matching of at least $\delta n/2$ edges.  
\end{claim}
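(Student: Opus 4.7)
The plan is to combine the first moment method with König's theorem on bipartite matchings. First, I would establish the auxiliary statement that a.a.s., every pair of disjoint vertex subsets $S, T \subseteq [n]$ with $|S| = |T| = \lceil \delta n/2 \rceil$ contains at least one edge of $G$ between them. For a fixed such pair the probability of having no edge between them is $(1-p)^{|S||T|} \le e^{-p \delta^2 n^2/4}$. A union bound over the at most $\binom{n}{\lceil \delta n/2 \rceil}^2 \le 4^n$ such pairs then yields
\[
\P(\text{some such pair spans no edges}) \le \exp\!\left(2n\log 2 - \tfrac{p\delta^2 n^2}{4}\right),
\]
which tends to $0$ precisely because $pn\to\infty$.

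Conditional on this auxiliary property, fix disjoint sets $A, B \subseteq V(G)$ with $|A|, |B| \ge \delta n$ and suppose, toward a contradiction, that the maximum matching in $G[A,B]$ has size $m < \delta n/2$. By König's theorem there is a vertex cover $C = C_A \cup C_B$ of $G[A,B]$ with $|C_A| + |C_B| = m$, $C_A \subseteq A$ and $C_B \subseteq B$. Setting $A' := A \setminus C_A$ and $B' := B \setminus C_B$, the cover condition forces $G$ to contain no edges between $A'$ and $B'$. But $|A'|, |B'| > \delta n - \delta n/2 = \delta n/2$, so each of $A'$ and $B'$ contains some subset of size exactly $\lceil \delta n/2 \rceil$; these subsets are disjoint (as $A$ and $B$ are), so the auxiliary property guarantees an edge between them, a contradiction.

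The only nontrivial step is the union bound computation, and the assumption $p = \omega(n^{-1})$ enters exactly to overcome the entropy factor $4^n$. I do not foresee any real obstacle: the argument is a standard large-deviation/König routine in random graph theory.
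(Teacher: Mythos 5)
Your proposal is correct and follows essentially the same route as the paper: both prove, via the identical union bound (at most $4^n$ pairs, failure probability $e^{-p\delta^2n^2/4}$ each, vanishing since $pn\to\infty$), that a.a.s.\ every two disjoint sets of size $\delta n/2$ span an edge, and then convert this into a matching of size $\delta n/2$. The only difference is the final deterministic step, where the paper extracts the matching greedily (pick an edge, delete its endpoints, repeat $\delta n/2$ times) while you invoke K\H{o}nig's theorem on the bipartite graph $G[A,B]$; both deductions are valid and equally short.
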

\begin{proof}
    A.a.s. there is at least one edge between every two disjoint vertex sets of sizes at least $\delta n /2$. Indeed, the expected number of disjoint vertex sets that violate this condition is at most
    \[
    \left(2^{n}\right)^2\cdot (1-p)^{\delta^2n^2/4} \le 4^n\cdot e^{-(\delta^2/4)\cdot pn^2}\to 0, 
    \]
    since $pn^2 = \omega (n)$. Therefore, we can construct the matching  by repeating the following greedy procedure $\delta n/2$ times: select an arbitrary edge between the sets and remove its endpoints.
\end{proof}

\section{Proofs of the main theorems}\label{sec:proofs}
\subsection{A well-spread dense closure}

The first step in our proofs is to establish that for the values of $d$ and $p$ that we consider, the $d$-rigidity closure of $G(n,p)$ is dense. Since the same argument is applied in our proof for the closures $C_{d,A}$ of other linear matroids, we state it in a more general setting. Note that the proof is a direct extension of ~\cite[Lemma 3.1]{LNPR}, and we include it here for the sake of completeness.

For a set $S$ and a number $p\in(0,1)$, denote by $S_p$ the random subset of $S$ containing each element of $S$ independently with probability $p$.
\begin{lemma}\label{lem:process}
Let $p,\varepsilon\in(0,1)$ be real numbers and $S$ a set of $m$ vectors. If $\dim\mathrm{span}(S)\le D$ then.
\[
\P(|S\cap\mathrm{span}(S_p)|<\varepsilon m) \le \P(X\le D),
\]
where $X\sim\mbox{Bin}(m,(1-\varepsilon)p)$.
\end{lemma}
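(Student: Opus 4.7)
The plan is to expose $S_p$ sequentially and track the growth of its span. Fix an arbitrary enumeration $v_1,\ldots,v_m$ of $S$ and sample $\chi_i \sim \mathrm{Bernoulli}(p)$ independently, so that $S_p = \{v_i : \chi_i = 1\}$. For each $i$, set $B_{i-1} := \mathrm{span}\{v_j : j<i,\, \chi_j = 1\}$ and $J_i := \mathbf{1}[v_i \notin B_{i-1}]$, the indicator that step $i$ is ``critical,'' i.e., $v_i$ could extend the running basis. Since $J_i$ is $\sigma(\chi_1,\ldots,\chi_{i-1})$-measurable, it is independent of $\chi_i$.

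Two elementary observations drive the argument. First, whenever $J_i = 0$ or $\chi_i = 1$ we have $v_i \in \mathrm{span}(S_p)$ (because $B_{i-1}\subseteq\mathrm{span}(S_p)$, and $v_i \in S_p$ when $\chi_i = 1$), giving
\[
|S \cap \mathrm{span}(S_p)| \;\ge\; m - \sum_{i=1}^m J_i(1-\chi_i).
\]
Second, every $i$ with $J_i = \chi_i = 1$ strictly enlarges $\mathrm{span}(S_p)$, so $\sum_{i=1}^m J_i \chi_i \le \dim\mathrm{span}(S_p) \le D$. Combined, the bad event $\{|S\cap \mathrm{span}(S_p)|<\varepsilon m\}$ forces $\sum J_i(1-\chi_i) > (1-\varepsilon)m$ and $\sum J_i \chi_i \le D$ to occur simultaneously.

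To produce the exact $\mathrm{Bin}(m,(1-\varepsilon)p)$ bound, I would invoke a two-round exposure. Introduce independent $\eta_i \sim \mathrm{Bernoulli}(1-\varepsilon)$ and set $\zeta_i := \eta_i \chi_i \sim \mathrm{Bernoulli}((1-\varepsilon)p)$, so that $T := \{v_i : \zeta_i = 1\} \subseteq S_p$ has $|T| \sim \mathrm{Bin}(m,(1-\varepsilon)p)$. Since $T \subseteq S \cap \mathrm{span}(S_p)$, the inclusion $\{|T| < \varepsilon m\} \subseteq \{|T| \le D\}$ immediately settles the easy regime $\varepsilon m \le D+1$. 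In the harder regime, rerun the sequential analysis using the $\zeta$-coins in place of the $\chi$-coins: the rank bound still gives at most $D$ useful successes among the $\zeta$-trials, and because each $\zeta_i$ is independent of the past and has success probability exactly $(1-\varepsilon)p$, a Chernoff-type estimate applied across all $m$ indices recovers the target probability $\P(\mathrm{Bin}(m,(1-\varepsilon)p)\le D)$.

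The main obstacle is matching the precise $(1-\varepsilon)p$ rate: a direct stopping-time argument applied to the original $\chi_i$-process only delivers $\P(\mathrm{Bin}(\lceil(1-\varepsilon)m\rceil,p)\le D)$, which does not generally dominate $\P(\mathrm{Bin}(m,(1-\varepsilon)p)\le D)$. The two-round exposure with the $\zeta_i$'s is the key device that converts the effective success rate from $p$ down to $(1-\varepsilon)p$ while preserving the full count of $m$ trials, allowing the bound to be stated in the clean form required for the applications in the subsequent sections.
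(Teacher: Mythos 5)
Your setup (the predictable indicators $J_i$, the two observations, and the conclusion that the bad event forces $\sum_i J_i(1-\chi_i)>(1-\varepsilon)m$ together with $\sum_i J_i\chi_i\le D$) is sound, and your diagnosis of the obstacle is exactly right: a fixed enumeration yields only about $(1-\varepsilon)m$ \emph{live} trials (the indices with $J_i=1$), so the stopping-time argument gives $\P(\mathrm{Bin}(\lceil(1-\varepsilon)m\rceil,p)\le D)$. The problem is that your proposed fix does not close this gap. Thinning the coins to $\zeta_i=\eta_i\chi_i$ changes only the success \emph{rate}; it does nothing to the trial \emph{count}. If you rerun the sequential analysis with the $\zeta$-coins, the rank bound controls only $\sum_i J_i^{\zeta}\zeta_i$, where $J_i^{\zeta}$ indicates that $v_i$ lies outside the span of the previously accepted $\zeta$-vectors; indices with $J_i^{\zeta}=0$ contribute nothing to the dimension count, so you cannot ``apply a Chernoff-type estimate across all $m$ indices.'' On the bad event you still control only about $(1-\varepsilon)m$ live indices, and the argument delivers $\P(\mathrm{Bin}(\lceil(1-\varepsilon)m\rceil,(1-\varepsilon)p)\le D)$ --- which, by stochastic domination in the number of trials, is \emph{larger} than the target $\P(\mathrm{Bin}(m,(1-\varepsilon)p)\le D)$, i.e., strictly weaker than the bound you already had. (Your ``easy regime'' $\varepsilon m\le D+1$ is handled correctly but is vacuous for the applications, where $D\ll\varepsilon m$.)

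The paper closes the gap with a device that a fixed enumeration cannot replicate: it reveals $S_p$ in $m$ steps in which the vector accepted at step $i$ is not a prescribed $v_i$ but is selected via a uniform variable $r_i\in[0,1]$ from the \emph{remaining} vectors, reordered so that vectors outside $\mathrm{span}(B_{i-1})$ come first. On the bad event, $|S\setminus\mathrm{span}(B_{i-1})|>(1-\varepsilon)m\ge(1-\varepsilon)|S\setminus B_{i-1}|$ at every step, so each of the $m$ independent events $\{r_i<(1-\varepsilon)p\}$ --- each of probability exactly $(1-\varepsilon)p$ --- forces a rank increment. That is how one gets $m$ genuine trials at rate $(1-\varepsilon)p$ rather than $(1-\varepsilon)m$ of them. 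If you want to salvage your route instead, you would need a binomial comparison inequality (in the spirit of Anderson--Samuels/Hoeffding) showing $\P(\mathrm{Bin}(\lceil(1-\varepsilon)m\rceil,p)\le D)\le\P(\mathrm{Bin}(m,(1-\varepsilon)p)\le D)$ for $D$ below the mean; this is plausible but is a nontrivial extra ingredient you have not supplied, and the lemma as stated makes no restriction on $D$.
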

\begin{proof}
We sample $S_p$ in the following nonstandard $m$-step method. Let $r_1,...,r_m$ be independent and uniform in $[0,1]$. Initialize $B_0:=\emptyset$. At every step $i\in [m]$, we start by arbitrarily ordering the vectors in $S\setminus B_{i-1}$ such that vectors in $S\setminus\mathrm{span}(B_{i-1})$ appear first. Afterwards,
\begin{enumerate}
    \item if $r_i>p$, set $B_i:=B_{i-1}$, and
    \item otherwise, set $B_i:=B_{i-1}\cup\{v_i\}$, where $v_i$ is the $\lceil(r_i/p)\cdot |S\setminus B_{i-1}|\rceil$-th vector in the current ordering of $S\setminus B_{i-1}$.
\end{enumerate}
It is clear that the subset $B_m$ has the same distribution as $S_p$. Indeed, its size is Binomial with parameters $m,p$, and in every step in which a new vector is added, it is chosen uniformly at random.

In addition, let $E$ denote the event that $|S\cap\mathrm{span}(B_m)|<\varepsilon m$, and $E_i$ the event that $r_i<(1-\varepsilon)p$. We claim that $E$ is contained in the event that at most $D$ of the $E_i$'s occur. Indeed, $E$ implies that 
$|S\cap\mathrm{span}(B_{i-1})|<\varepsilon m$ whence if both $E$ and $E_i$ occur then
$$\lceil(r_i/p)\cdot |S\setminus B_{i-1}|\rceil \le  |S\setminus\mathrm{span}(B_{i-1})|.$$
Consequently, $v_i\notin \mathrm{span}(B_{i-1})$ and $\dim\mathrm{span}(B_i)>\dim\mathrm{span}(B_{i-1})$. Due to the fact that $\dim\mathrm{span}(S)\le D$, this can occur at most $D$ times. Therefore, \[\P(E) \le \P\left(\sum_{i=1}^m\mathbf{1}_{E_i}\le D\right) =\P_{X\sim\mbox{Bin}(m,(1-\varepsilon)p)}(X\le D).\] as claimed.
\end{proof}

For example, if $S\subset \R^{dn}$ consists of the rows of $R:=R(K_n,\p)$, then the random variable $|S\cap\mathrm{span}_\R(S_p)|$, whose distribution is studied in the Lemma \ref{lem:process}, is equal to the size of $C_d(G(n,p))$. It turns out that the bound on $|C_d(G(n,p))|$ given by this lemma is not sufficient for our purposes, and we need to simultaneously apply it on $C_{d,A}(G(n,p))$ for all sets $A$ that are not too large.

\begin{lemma}\label{lem:large_wide_closure}
    Fix $1/2>\varepsilon>0$. Let $p=p(n)\ge \log n / n$, $d=d(n)$ satisfying $d\le (1/2-\varepsilon)np$ and $G\sim G(n,p)$. Then, a.a.s., $|C_{d,A}(G)|\ge \varepsilon n^2/15$ for every $A\subset [n]$ of size $|A|\le 0.9 n$.
\end{lemma}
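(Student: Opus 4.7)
The plan is to fix a vertex subset $A \subset [n]$ with $|A| \le 0.9n$, apply Lemma \ref{lem:process} to the set of projected row vectors indexed by edges outside $\binom{A}{2}$, and then union-bound over $A$. For such $A$, set
\[
S := \bigl\{P_{W_A}\bv_e : e \in \tbinom{[n]}{2} \setminus \tbinom{A}{2}\bigr\}, \quad m := |S| = \tbinom{n}{2} - \tbinom{|A|}{2}, \quad D := d(n-|A|).
\]
By \eqref{eq:dim_A}, $\dim\mathrm{span}(S) \le D$, and the random subset $S_p$ from Lemma \ref{lem:process} has the same distribution as the projections of those edges of $G\sim G(n,p)$ that lie outside $\binom{A}{2}$, so $|C_{d,A}(G)| = |S\cap\mathrm{span}(S_p)|$. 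I would choose $\varepsilon' := \varepsilon n^2/(15m)$; since $|A|\le 0.9n$ implies $m \ge (1+o(1))\cdot 0.095\,n^2$, this gives $\varepsilon' \le 0.71\varepsilon < 1$ for all large $n$. Lemma \ref{lem:process} then yields
\[
\P\bigl(|C_{d,A}(G)| < \varepsilon n^2/15\bigr) \;\le\; \P(X\le D),\qquad X\sim \mathrm{Bin}\bigl(m,(1-\varepsilon')p\bigr).
\]

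The next step is to verify that $D/\E[X] \le 1-\delta$ for some $\delta = \delta(\varepsilon) > 0$, uniformly over $A$. Writing $\alpha = |A|/n$ and using $m \ge \tfrac12(1-\alpha^2)n^2(1-o(1))$, $d \le (1/2-\varepsilon)np$, and $\varepsilon'\le 0.71\varepsilon$, a direct calculation gives
\[
\frac{D}{\E[X]} \;\le\; \frac{dn(1-\alpha)}{m(1-\varepsilon')p} \;\le\; \frac{1-2\varepsilon}{(1+\alpha)(1-\varepsilon')}(1+o(1)) \;\le\; \frac{1-2\varepsilon}{1-0.71\varepsilon}(1+o(1)) \;=\; 1-\Omega(\varepsilon).
\]
Combined with $\E[X] \ge 0.09\,(1-\varepsilon')n^2 p = \Omega_\varepsilon(n\log n)$, which follows from $p\ge \log n/n$, Chernoff's inequality \eqref{eq:cher_non_sharp} yields $\P(X\le D) \le \exp(-\Omega_\varepsilon(n\log n))$. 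A union bound over the at most $2^n$ admissible sets $A\subset [n]$ then closes the argument, since $2^n\cdot \exp(-\Omega_\varepsilon(n\log n)) = o(1)$.

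The only delicate point is that the constant factor gap between $D$ and $\E[X]$ must hold uniformly in $|A|$. Fortunately, as $\alpha$ grows the ratio $D/\E[X]$ shrinks by a factor of $1/(1+\alpha)$ — the contracted matroid has strictly fewer dimensions per row as $A$ enlarges — so the tightest case is $\alpha=0$, where the statement essentially reduces to a dense closure bound for $C_d(G)$ in the spirit of \cite[Lemma 3.1]{LNPR}. The numerical constants $0.9$ and $1/15$ are not intrinsic; they are calibrated so that $\varepsilon' < 1$ even in the worst case $|A|=0.9n$, while keeping $\E[X]=\Theta(n^2 p)$ large enough for the Chernoff bound to beat the $2^n$ union bound.
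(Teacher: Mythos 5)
Your proof is correct and follows essentially the same route as the paper: apply Lemma \ref{lem:process} to the projected rows with $D=d(n-|A|)$ via \eqref{eq:dim_A}, verify a constant-factor gap $D\le(1-\Omega(\varepsilon))\E X$, use Chernoff \eqref{eq:cher_non_sharp} with $\E X=\Omega_\varepsilon(n\log n)$, and union-bound over the at most $2^n$ sets $A$. The only cosmetic difference is that you recalibrate the lemma's parameter to $\varepsilon'=\varepsilon n^2/(15m)$ so the threshold is exactly $\varepsilon n^2/15$, whereas the paper applies the lemma with $\varepsilon$ itself and then uses $\varepsilon m\ge \varepsilon n^2/15$; both are valid.
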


\begin{proof}
Let $A\subset [n]$ of size $a$. By \eqref{eq:dim_A}, we can apply Lemma \ref{lem:process} to \\ $S=\left\{P_{W_A}\bv_f~:~f\notin\binom A2\right\}$, $m=\binom n2-\binom{a}2$ and $D=d(n-a)$. In such a case, $|S\cap\mathrm{span}_{\R}(S_p)|=|C_{d,A}(G)|$ and we obtain
\[
\P(|C_{d,A}(G)|<\varepsilon m) \le \P(X\le D),
\]
where $X\sim\mathrm{Bin}(m,(1-\varepsilon)p).$ 

A straightforward computation yields that $D\le (1-\varepsilon /2)\E X$ for all sufficiently large $n$. Hence, by Chernoff's inequality \eqref{eq:cher_non_sharp},
$$\P(X\le D) \le \exp(-(\varepsilon/2)^2 m(1-\varepsilon)p/2).$$
In addition, $m>n^2/15$ since $a\le 0.9n$. Therefore,
\begin{align*}
\P(|C_{d,A}(G)|< \varepsilon n^2/15)&~\le \P(|C_{d,A}(G)|< \varepsilon m)
\\ &~\le \exp(-\varepsilon^2(1-\varepsilon) n\log n/120).
\end{align*}
The proof is concluded by taking the union bound over the (less than $2^n$) sets $A$ of size $a\le 0.9n$.
\end{proof}

The next step in our argument is to use the fact that the graphs $C_{d,A}(G)$ for every $|A|
\le 0.9n$ are dense, in order to establish the existence of a large clique in $C_d(G)$.

We will need the following folklore fact from graph theory.
\begin{fact}\label{fct:folk}
    An $a$-vertex $b$-edge graph has an induced subgraph whose minimum degree is at least $b/(4a).$
\end{fact}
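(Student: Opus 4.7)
The plan is to prove the fact by a standard greedy peeling argument. Set the threshold $\tau := b/(4a)$ and construct a decreasing sequence of induced subgraphs $G = H_0 \supsetneq H_1 \supsetneq \cdots$ by the following rule: at each step, if the current graph $H_i$ contains a vertex $v$ with $\deg_{H_i}(v) < \tau$, let $H_{i+1} := H_i - v$; otherwise halt.

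The process must terminate at some $H_j$ since the number of vertices strictly decreases at each step. If $H_j$ is nonempty, then by the halting condition every vertex in $H_j$ has degree at least $\tau = b/(4a)$, which is exactly the conclusion. So the only point to check is that the process does not delete every vertex of $G$. Each deletion step removes strictly fewer than $\tau$ edges, so the total number of edges removed over at most $a$ deletion steps is strictly less than $a\tau = b/4$. Since reaching the empty graph would require removing all $b$ edges of $G$, this would force $b < b/4$, which is impossible when $b > 0$. In the trivial case $b = 0$ the conclusion holds vacuously for any single-vertex induced subgraph.

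I do not anticipate any real obstacle beyond the degenerate case $b = 0$, which is handled by inspection. In fact, essentially the same peeling argument gives the slightly stronger folklore bound of $b/(2a)$ (i.e., half the average degree), so the stated threshold $b/(4a)$ is slack by a factor of two, and the constant $1/4$ presumably reflects what is convenient for the downstream applications later in the section rather than the tightest form of the argument.
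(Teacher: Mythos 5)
Your peeling argument is correct and is precisely the standard folklore proof that the paper omits (the fact is stated there without proof). One small quibble with your closing aside: the same argument with threshold $b/(2a)$ does work, but $b/(2a)$ is a \emph{quarter} of the average degree $2b/a$, not half of it; the usual sharp form of this folklore statement is that one can reach minimum degree $b/a$, i.e., half the average degree — in any case this does not affect the validity of your proof of the stated bound $b/(4a)$.
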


\begin{lemma}\label{lem:clique_in_closure}
Fix $1/2>\varepsilon>0$. Let $\log n /n \le p = o(n^{-1/2})$, $d=d(n)$ satisfying $d\le (1/2-\varepsilon)np$ and $G\sim G(n,p)$. Then, a.a.s., $C_d(G)$ contains a clique with at least $0.9n$ vertices.
\end{lemma}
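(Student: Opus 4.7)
The plan is to extract an initial clique of linear size in $C_d(G)$ via Vill\'anyi's Lemma~\ref{lem:vil}, and then grow it to size $0.9n$ through a two-mode iteration.  For the initial clique, apply Lemma~\ref{lem:large_wide_closure} with $A=\emptyset$ to obtain $|C_d(G)|\ge \varepsilon n^2/15$ a.a.s., and use Fact~\ref{fct:folk} to extract an induced subgraph $H_0$ of $C_d(G)$ on a vertex set $V_0$ with minimum degree at least $\varepsilon n/60$.  A quick observation shows that $H_0$ is itself closed in the $d$-rigidity matroid on $V_0$: any linear dependence among the rows $\bv_e$ with $e\in\binom{V_0}{2}$ in the rigidity matrix of $K_{V_0}$ lifts to the $K_n$ rigidity matrix, since those rows vanish outside the columns indexed by $V_0$.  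The regime $p=o(n^{-1/2})$ forces $d=o(\sqrt n)$ and hence $d(d+1)=o(n)$, so the minimum degree of $H_0$ exceeds $d(d+1)$ for all large $n$, and Lemma~\ref{lem:vil} furnishes a vertex of $H_0$ whose neighborhood is a clique $K_0$ of size at least $\varepsilon n/60$ inside $C_d(G)$.

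I would then iteratively extend $K\supseteq K_0$, maintaining the invariant that $K$ is a clique in $C_d(G)$ and halting once $|K|\ge 0.9n$.  In the \emph{greedy} mode, if some $v\in R:=[n]\setminus K$ has at least $d$ $C_d(G)$-neighbors in $K$, add it to $K$; Fact~\ref{fct:heavy_vertex} guarantees that $K\cup\{v\}$ remains a clique.  Otherwise, enter the \emph{merge} mode: every $v\in R$ has fewer than $d$ $C_d(G)$-neighbors in $K$, so the $K$-$R$ edges of $C_d(G)$ total at most $nd=o(n^{3/2})$.  By Lemma~\ref{lem:large_wide_closure} with $A=K$ together with Claim~\ref{clm:CdA}, at least $\varepsilon n^2/15$ edges of $C_d(G)$ lie outside $\binom{K}{2}$; subtracting the $K$-$R$ contribution leaves at least $\varepsilon n^2/16$ of them inside $R$.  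Applying Fact~\ref{fct:folk} and Lemma~\ref{lem:vil} again, now on the induced subgraph of $C_d(G)$ on $R$, produces a disjoint clique $K_1\subseteq R$ in $C_d(G)$ of size at least $\varepsilon n/64$.  Finally, Claim~\ref{clm:Gnp_matching} supplies a matching in $G$ between $K$ and $K_1$ of size $\Omega(\varepsilon n)$, far exceeding the $\binom{d+1}{2}=o(n)$ required by Claim~\ref{clm:matching}, which therefore merges $K\cup K_1$ into a single clique in $C_d(G)$.

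Each greedy step enlarges $|K|$ by one and each merge step by $\Omega(\varepsilon n)$, so the iteration halts after at most $n$ rounds with $|K|\ge 0.9n$.  The main obstacle is ensuring the density guarantee remains available throughout the growth of $K$: this is precisely the role of the \emph{simultaneous} a.a.s.\ formulation of Lemma~\ref{lem:large_wide_closure} over all sets $A$ with $|A|\le 0.9n$, which can be invoked with $A$ equal to the current clique at every stage without any further union-bound cost.  The hypothesis $p=o(n^{-1/2})$ enters only through the bound $d(d+1)=o(n)$ required to apply Vill\'anyi's Lemma~\ref{lem:vil}, both at the initial extraction and in every merge step.
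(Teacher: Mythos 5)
Your proposal is correct and takes essentially the same approach as the paper: the paper runs your greedy/merge dichotomy as a single contradiction applied to the largest clique $A$ in $C_d(G)$ (fewer than $d|A|$ crossing edges by Fact~\ref{fct:heavy_vertex} and maximality, hence $A^c$ is dense in $C_d(G)$, hence Fact~\ref{fct:folk} and Lemma~\ref{lem:vil} yield a disjoint linear-size clique that Claims~\ref{clm:Gnp_matching} and~\ref{clm:matching} merge into $A$), rather than as an explicit iteration. The only inaccuracy is your closing remark: the hypothesis $p=o(n^{-1/2})$ is used not only to get $d(d+1)=o(n)$ for Lemma~\ref{lem:vil}, but also to ensure $\binom{d+1}{2}=o(n)$ fits inside the $\Omega(\varepsilon n)$ matching supplied by Claim~\ref{clm:Gnp_matching}.
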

\begin{proof}
    Condition on the a.a.s. events from Lemma \ref{lem:large_wide_closure} and Claim \ref{clm:Gnp_matching}. Let $A$ denote the largest clique in $C_d(G)$ and suppose, in contradiction, that $|A|<0.9n$. By Lemma \ref{lem:large_wide_closure}, there are at least $\varepsilon n^2/15$ edges in $C_{d,A}(G)$. In addition, since $A$ induces a clique in $C_d(G)$, these edges also belong to $C_d(G)$ by Claim \ref{clm:CdA}. We claim that less than $d|A|$ of these edges connect a vertex from $A$ to a vertex from $A^c$. Indeed, otherwise there exists a vertex $v\in A^c$ with $d$ neighbors from $A$ in the graph $C_d(G)$ whence, by Fact \ref{fct:heavy_vertex}, $A\cup\{v\}$ also induces a clique in $C_d(G)$ --- contradicting the maximality of $A$.
    
    Therefore, $A^c$ induces at least $\varepsilon n^2/15-o(n^{3/2})$ edges in $C_d(G)$. By Fact \ref{fct:folk}, there is a subset of $A^c$ that induces a subgraph $H$ of $C_d(G)$ with minimum degree
    \[
    \delta(H) \ge \frac{\varepsilon n^2/15-o(n^{3/2})}{4|A^c|} \ge (1+o(1))\varepsilon n/60.
    \]
     The graph $H$ is also a closed graph since it is an induced subgraph of $C_d(G)$. In addition, its minimum degree is larger than $d(d+1)=o(n)$. Therefore, by Lemma \ref{lem:vil}, there exists a vertex $v \in H$ whose neighbor set $B$ induces a clique in $H$. 

     Consequently, $B$ is a clique in $C_d(G)$ that is disjoint of $A$. Moreover, by maximality, $|A|\ge |B|\ge \delta(H)\ge (1+o(1))\varepsilon n/60$. Therefore, since $d=o(n^{1/2})$, by Claim \ref{clm:Gnp_matching} there is a matching of size $\binom {d+1}2$ between $A,B$ in $G$. Hence, by Claim \ref{clm:matching}, the union $A\cup B$ also induces a clique in $C_d(G)$. This contradicts the maximality of $A$, and we deduce that $|A|\ge 0.9n$ as claimed.
\end{proof}

\begin{remark}
If the probabilistic argument in Lemma \ref{lem:process} was applied {\em only} to show that $C_d(G)$ a.a.s. has size $~\Omega(\varepsilon n^2)$, our reasoning would merely guarantee that it a.a.s. contains a clique of size $O(\sqrt{\varepsilon}n)$. In contrast, Lemma \ref{lem:clique_in_closure} establishes the existence of a clique of size $0.9n$, which is independent of $\varepsilon$. This underscores the importance of applying Lemma \ref{lem:process} to $C_{d,A}(G)$ for \emph{every}  $A$ satisfying $|A| \leq 0.9n$.

\end{remark}

\subsection{Proofs of main theorems}

In the previous subsection we have established the existence of large clique in the $d$-rigidity closure of $G(n,p)$. To derive $d$-rigidity we need to show that $C_d(G)$ is an $n$-vertex clique. We argue that for any large clique $A$ in $C_d(G)$ which does not contain all the vertices,  there is a vertex $v\in B:=A^c$ that is adjacent to at least $d$ vertices in $A$ --- whence $A\cup\{v\}$ is an even larger clique in the closure. 

To prove this we show that either there is a vertex in $B$ with (substantially) more than $d$ neighbors in  $A$, or --- applicable only in the proof of Theorem \ref{thm:delta} --- there is an isolated vertex $v$ in the subgraph of $G$ induced by $B$, whence $v$ has at least $\delta(G)$ neighbors in $A$.

\begin{proof}[Proof of Theorem \ref{thm:delta}]
First, if $p>(1+\varepsilon)C_*\log n / n$ then by Claim \ref{clm: min_deg_small_p} and the discussion below it there exists $\delta=\delta(\varepsilon)>0$, where $\delta\to 1$ as $\varepsilon\to\infty$, such that a.a.s., $\delta(G) > (1+\delta)np/2$. Hence, since a.a.s. $|E(G)|=(1+o(1))pn^2/2$, we have that
\[
|E(G)| < \delta(G)n - \binom{\delta(G) +1}2,
\]
and $G$ does not have enough edges to be $\delta(G)$-rigid.

For the main part of the proof, we start by noting that since $\delta(G)$-rigidity is not a monotone property we cannot assume that $p$ is of the form $c\log n /n $ for some constant $c>$, which slightly complicates the technicalities of our argument.

Let $p=p_n<(1-\varepsilon)C_*\log n/n$. It is well-known that if $(n-1)p\le \log n$ then a.a.s. $\delta(G)$ is either $0$ or $1$ and, in addition, $G$ is connected (i.e., $1$-rigid) if and only if $\delta(G)=1$ ~\cite{bollobas1998random}. 

Therefore we may assume that $c=c_n:=(n-1)p/\log n \in (1,(1-\varepsilon)C_*)$. Recall that $a=a_n$ is the smallest positive root of $\varphi_c(t)$ defined in \eqref{eq:phi}. We set the stage by selecting some parameters. Let $1>\beta>\varphi_{0.9}(1/2)\approx 0.894$, and we choose a constant $\delta>0$ independent of $n$ such that for every $n$,
\begin{enumerate}[(i)]
    \item $a+\delta < c/2$, and
    \item $\varphi_{c}(a+\delta)<(1-\beta)/4$.
\end{enumerate}
Such a constant $\delta>0$ exists since $a$ is bounded away from $c/2$ in the interval $c\in (1,(1-\varepsilon)C_*)$.

\begin{claim}\label{clm:inner}
The following event occurs a.a.s.: for every vertex subset $B$ in $G$ of size $1<b\le 0.1n$, either
\begin{enumerate}[(i)]
    \item there is a vertex $v\in B$ with at least $(a+\delta)\log n$ neighbors in $G$ outside $B$, or
    \item $B$ induces less than $b/2$ edges in $G$.
\end{enumerate}    
\end{claim}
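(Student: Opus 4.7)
The plan is to apply the first moment method with a union bound over all vertex sets $B \subseteq [n]$ of size $b \in [2, 0.1n]$. The key structural observation is that, for fixed $B$, the failure of (i) depends only on the $(n-b)b$ potential crossing edges between $B$ and $B^c$, while the failure of (ii) depends only on the $\binom{b}{2}$ potential edges inside $B$; since these are disjoint, the two events are independent and their joint probability factorizes.

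For the crossing edges, each $v\in B$ has an independent $\mathrm{Bin}(n-b,p)$-distributed number of neighbors in $B^c$. Writing $c':=(n-b)p/\log n$, the sharp Chernoff bound \eqref{eq:cher_main} gives $\P(\mathrm{Bin}(n-b,p)<(a+\delta)\log n) \le n^{\varphi_{c'}(a+\delta)-1+o(1)}$, whence the probability that (i) fails for a given $B$ is at most $n^{b(\varphi_{c'}(a+\delta)-1)+o(b)}$. For the internal edges, if $(b-1)p<1$ the upper tail \eqref{eq:cher_non_sharp_upper} yields $\P(|E(B)|\ge b/2)\le(e(b-1)p)^{b/2}$; otherwise this bound is trivial, so I simply use $\P\le 1$. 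Combining with $\binom{n}{b}\le (en/b)^b$ and $\log_n p = -1+o(1)$, the expected number of bad $b$-sets is at most $n^{b\Psi(b)+o(b)}$, where $\Psi(b) = \varphi_{c'}(a+\delta)-(1+\log_n b)/2$ in the small-$b$ regime $(b-1)p<1$, and $\Psi(b) = \varphi_{c'}(a+\delta)-\log_n b$ in the large-$b$ regime.

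The key step is to show $\Psi(b)\le -\eta < 0$ uniformly for $b\in[2,0.1n]$. When $b=o(n)$, $c'=(1+o(1))c$ so $\varphi_{c'}(a+\delta)\le\varphi_c(a+\delta)+o(1)<(1-\beta)/4+o(1)$ by the defining inequality $\varphi_c(a+\delta)<(1-\beta)/4$; combined with either $(1+\log_n b)/2\ge 1/2$ or $\log_n b\ge 1-o(1)$ (depending on the regime), this forces $\Psi(b)\le -(1+\beta)/4+o(1)$. When $b=\Theta(n)$, $c'\in[0.9c,c]$, and using the monotonicity of $\varphi$ (increasing in $t$ on $(0,c')$, decreasing in the subscript on $\{t<c\}$) with $a+\delta<c/2$,
\[
\varphi_{c'}(a+\delta)\le\varphi_{0.9c}(c/2)=1-c\bigl(1-\varphi_{0.9}(1/2)\bigr)\le\varphi_{0.9}(1/2)<\beta,
\]
where the penultimate inequality uses $c\ge 1$. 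Since $\log_n b = 1-o(1)$ in this regime, $\Psi(b)\le\beta-1+o(1)<0$. Taking $\eta=\tfrac12\min\{(1+\beta)/4,\,1-\beta\}>0$, the expected count of bad sets of size $b$ is at most $n^{-\eta b}$, which sums to $o(1)$, and Markov concludes. The main obstacle is the large-$b$ regime: there the internal-tail bound is useless (since $\binom{b}{2}p\gg b/2$), and $c'$ may be as small as $0.9c$ so the defining inequality on $\varphi_c$ cannot be invoked directly; the choice $\beta>\varphi_{0.9}(1/2)\approx 0.894$ is designed exactly to yield a positive gap $\beta-\varphi_{0.9c}(c/2)$ uniformly in $c\ge 1$.
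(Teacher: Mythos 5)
Your proof is correct and follows essentially the same route as the paper's: a first-moment union bound exploiting the independence of the crossing-edge and internal-edge events, the sharp Chernoff bound combined with the upper-tail bound $(e(b-1)p)^{b/2}$ for small $B$, and the monotonicity chain $\varphi_{c'}(a+\delta)\le\varphi_{0.9c}(c/2)\le\varphi_{0.9}(1/2)<\beta$ for large $B$. The only differences are cosmetic (you split at $b\approx 1/p$ rather than at $b=n^{\beta}$, and you apply Chernoff directly to $\mathrm{Bin}(n-b,p)$ instead of first dominating by $\mathrm{Bin}(0.9n,p)$), and the informal "$b=o(n)$ versus $b=\Theta(n)$" dichotomy should be replaced by a fixed cutoff to make the union bound uniform, which your inequalities already support.
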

\begin{proof}
    We bound the probability that a set $B$ of size $b$ violates both conditions. Note that the first condition deals with edges between $B$ and its complement, while the second deals with edges within $B$, and therefore they are independent. In addition, for each of the $b$ vertices in $B$, the events that each of them violate the first condition are independent. Therefore the probability of the event $E_B$ that $B$ violates both conditions is 
    \begin{equation}\label{eq:prob_expression}
        \P_{Y\sim \mathrm{Bin}(n-b,p)}(Y<(a+\delta)\log n)^b\cdot
    \P_{Z\sim \mathrm{Bin}\left(\binom b2,p\right)}(Z\ge b/2).
    \end{equation}
We give different bounds for this probability in two regimes of $b$. 

First, if $b\le n^{\beta}$ then $n-b=n(1+o(1))$ and  by Chernoff's inequality \eqref{eq:cher_main},
\[
\P(Y<(a+\delta)\log n) \le n^{\varphi_c(a+\delta)-1+o(1)}
\le n^{(1-\beta)/4-1+o(1)}
\]
where the last inequality is by the second assumption on $\delta.$ Additionally, we find by Chernoff's inequality \eqref{eq:cher_non_sharp_upper} that
\[
\P(Z\ge b/2)\le(ebp)^{b/2}<n^{-(1-\beta+o(1))b/2}.
\]
Here the last inequality follows from $b\le n^\beta$ and $p=O(\log n/n).$
Using these two inequalities and $\binom nb<n^b$ we find that
\begin{equation}\label{eq:small_sum}
    \sum_{B:1\le |B|\le n^{\beta}}\P(E_B)\le  \sum_{b=1}^{n^{\beta}} n^{-(1-\beta+o(1))b/4} \to 0,\quad\mbox{as $n\to \infty$.}
\end{equation}

On the other hand, if $n^{\beta}\le b \le 0.1 n$, we have that $n-b\ge 0.9n$. Hence,
\begin{align*}
    \P(Y<(a+\delta)\log n) &~\le 
\P_{Y'\sim \mathrm{Bin}(0.9n,p)}(Y'<(a+\delta)\log n)\\
&~\le n^{\varphi_{0.9c}(a+\delta)-1+o(1)}\\
&~\le n^{\varphi_{0.9}(1/2)-1+o(1)}.
\end{align*}
Here the second inequality follows by Chernoff's inequality and the last transition is derived from our first assumption on $\delta$, monotonicity  of $\varphi$:
\[
\varphi_{0.9c}(a+\delta)\le \varphi_{0.9c}(c/2) \le \varphi_{0.9}(1/2).
\]
Therefore, using the last inequality and $\binom nb\le (en/b)^b\le n^{(1-\beta+o(1))b}$ we find
\begin{equation}\label{eq:large_sum}
    \sum_{B:n^\beta\le |B|\le 0.1n}\P(E_B)\le  \sum_{b=n^\beta}^{0.1n} n^{(\varphi_{0.9}(1/2)-\beta+o(1))b} \to 0,
\end{equation}
as $n\to\infty$ by our assumption on $\beta$.

The claim is derived by combining \eqref{eq:small_sum} and \eqref{eq:large_sum} that show that the expected number of sets violating both conditions vanishes.
\end{proof}

To conclude the proof, we condition on the following three events that occur a.a.s. and show that they deterministically imply $\delta(G)$-rigidity.
\begin{enumerate}
    \item $\delta(G)<(a+\delta/2)\log n$, which occurs a.a.s. by Claim \ref{clm: min_deg_small_p},
    \item $C_d(G)$ contains a clique with $0.9n$ vertices where $d=\lceil(1/2-\delta/(2c))np\rceil$, which occurs a.a.s. by Lemma \ref{lem:clique_in_closure}, and
    \item the event in Claim \ref{clm:inner}.
\end{enumerate}
Our first assumption on $\delta$ is that $a+\delta<c/2$. Therefore, 
$$\delta(G)<(a+\delta /2)\log n < (c/2-\delta /2)\log n \le d$$
Hence, by the monotonicity of the rigidity closure with respect to dimension, we derive that $C_{\delta (G)}(G)$ also contains a clique with at least $0.9n$ vertices. 

Let $A$ be the largest clique in this closure and suppose, in contradiction that $B=A^c$ is non-empty. If $B$ satisfies the first condition in Claim \ref{clm:inner}, then there is a vertex $v\notin A$ with more than $\delta(G)$ neighbors from $A$. Hence, by Fact \ref{fct:heavy_vertex}, $A\cup \{v\}$ is also a clique in $C_{\delta (G)}(G)$ which contradicts the maximality of $A$. Otherwise, $B$ satisfies the second condition in Claim \ref{clm:inner}. In such a case, there is a vertex $v$ in $B$ that is isolated in the subgraph of $G$ induced by $B$. Consequently, all its neighbors in $G$ belong to $A$. But since $\delta(G)$ is the minimum degree in $G$, this means that $v$ has at least $\delta(G)$ neighbors in $A$, and we apply Fact \ref{fct:heavy_vertex} again to contradict the maximality of $A$.

In conclusion, $C_{\delta(G)}(G)$ is a clique and therefore $G$ is $\delta(G)$-rigid, as claimed.
\end{proof}

The proof of Theorem \ref{thm:np/2} is very similar and slightly simpler.
\begin{proof}[Proof of Theorem \ref{thm:np/2}]
First, if $d>(1/2+\varepsilon)np$ then a.a.s. $G$ does not have enough edges to be $d$-rigid. Next, suppose $d<(1/2-\varepsilon)np$. By Lemma \ref{lem:clique_in_closure} we have that a.a.s. $C_d(G)$ contains a clique of $0.9n$ vertices. To prove the theorem we claim that a.a.s. for every vertex subset $B$ in $G$ of size $|B|=b\le 0.1n$ there is a vertex $v\in B$ with $d$ neighbors outside of $B$. The theorem follows from the claim using Fact \ref{fct:heavy_vertex} as in the proof of Theorem \ref{thm:delta}, so we proceed directly to the proof of the claim.

Let $Y\sim\mathrm{Bin}(n-b,p)$. By combining Chernoff's inequality with the bounds $\E[Y]\ge 0.9C_*\log n$ and $d/\E[Y] <(1/2-\varepsilon)/0.9$, we find that there exists some constant $\beta>0$ such that
$
\P(Y<d) \le n^{-\beta}. 
$
Therefore, 
\begin{equation}\label{eq:big_sum_2}
    \sum_{b=n^{1-\beta/2}}^{0.1n} \binom nb (\P(Y<d))^b \le 
    \sum_{b=n^{1-\beta/2}}^{0.1n} n^{(-\beta/2+o(1))b}\to 0,
\end{equation}
as $n\to\infty$. Here we use $\binom nb\le (en/b)^b\le n^{(\beta/2+o(1))b}$.

If $1\le b<n^{1-\beta/2}$, then $\alpha:=d/\E[Y]<1/2-\varepsilon+o(1)$ whence by Chernoff's inequality,
\[
\P(Y<d) \le \left(\frac{e^{-(1-\alpha)}}{\alpha^\alpha}\right)^{np} \le n^{\varphi_{C_*}(\alpha C_*)-1},
\]
where the last inequality uses $np\ge C_*\log n$. Note that $\alpha < 1/2$ implies that $\varphi_{C_*}(\alpha C_*)<0$ since $\varphi_{C_*}(C_*/2)=0$. Therefore, using $\binom nb<n^b$,
\begin{equation}\label{eq:small_sum_2}
    \sum_{b=1}^{n^{1-\beta/2}} \binom nb (\P(Y<d))^b \le 
    \sum_{b=1}^{n^{1-\beta/2}} n^{(\varphi_{C_*}(\alpha C_*))b} \to 0
\end{equation}
as $n\to\infty$.

The claim is derived by combining \eqref{eq:big_sum_2} and \eqref{eq:small_sum_2} that show that the expected number of sets violating the condition in the claim vanishes.
\end{proof}

\section{Discussion and open problems}\label{sec:open}
Several open problems suggest themselves:
\begin{enumerate}
    \item Conjecture \ref{conj:KLM} is still open for $\Omega(n^{-1/2})\le p < 1$. Our proof for Theorem \ref{thm:np/2} uses the assumption $p=o(n^{-1/2})$ in two critical ways: (i) using Vill\'any's Lemma \ref{lem:vil}, which shows that a dense closed graph contains a large clique, and (ii) establishing the existence of a matching of $\binom{d+1}2$ edges between any two disjoint vertex sets of linear size in $G$. Therefore, it appears to us that new ideas are required to pass the $n^{-1/2}$ barrier.
    \item The dense regime, where $p$ is bounded away from $0$ and $1$, presents significant challenges. For instance, we do not yet know how to prove that $G(n,1/2)$ is $(\varepsilon n)$-rigid for some $\varepsilon>0$.
    \item In the regime $p<p_c$, we are able to find the precise maximum dimension $d=\delta(G)$ such that $G$ is $d$-rigid. We conjecture, and this is verified by some numerical experiments, that for $p>p_c$, a.a.s., $G$ is $d$-rigid if and only if the event $|E(G)|\ge dn-\binom{d+1}{2}$ occurs. However, our probabilistic approach (Lemma \ref{lem:process}) is meaningful only if the dimension $d$ is bounded away from $np/2$, and is therefore too weak for this conjecture.
    \item Finally, we note that our proof, in its current form, applies to all $1$-extendable abstract rigidity matroids. Do the theorems hold true for \emph{all} abstract rigidity matroids? Note that Theorem \ref{thm:LNPR} is known to hold for all abstract rigidity matroids. 
    \end{enumerate}

\medskip

\noindent \textbf{Acknowledgments}
We thank Orit Raz for her help with the proof of Claim \ref{clm:matching}.

\bibliographystyle{abbrv}
\bibliography{rig}

\begin{thebibliography}{10}

\bibitem{AR78}
L.~Asimow and B.~Roth.
\newblock The rigidity of graphs.
\newblock {\em Transactions of the American Mathematical Society}, 245:279--289, 1978.

\bibitem{AR79}
L.~Asimow and B.~Roth.
\newblock The rigidity of graphs.~{II}.
\newblock {\em Journal of Mathematical Analysis and Applications}, 68(1):171--190, 1979.

\bibitem{benjamini2022determining}
I.~Benjamini and E.~Tzalik.
\newblock Determining a points configuration on the line from a subset of the pairwise distances.
\newblock {\em arXiv preprint arXiv:2208.13855}, 2022.

\bibitem{bollobas1998random}
B.~Bollob{\'a}s and B.~Bollob{\'a}s.
\newblock {\em Random graphs}.
\newblock Springer, 1998.

\bibitem{book:Connelley-Simon}
R.~Connelly and S.~D. Guest.
\newblock {\em Frameworks, tensegrities, and symmetry}.
\newblock Cambridge University Press, Cambridge, 2022.

\bibitem{ER}
P.~Erd\H{o}s and A.~R\'{e}nyi.
\newblock On random graphs. {I}.
\newblock {\em Publ. Math. Debrecen}, 6:290--297, 1959.

\bibitem{frieze2015introduction}
A.~Frieze and M.~Karo{\'n}ski.
\newblock {\em Introduction to random graphs}.
\newblock Cambridge University Press, 2015.

\bibitem{girao2024reconstructing}
A.~Gir{\~a}o, F.~Illingworth, L.~Michel, E.~Powierski, and A.~Scott.
\newblock Reconstructing a point set from a random subset of its pairwise distances.
\newblock {\em SIAM Journal on Discrete Mathematics}, 38(4):2709--2720, 2024.

\bibitem{book:Graver-Servatius-Servatius}
J.~Graver, B.~Servatius, and H.~Servatius.
\newblock {\em Combinatorial rigidity}, volume~2 of {\em Graduate Studies in Mathematics}.
\newblock American Mathematical Society, Providence, RI, 1993.

\bibitem{hendrickson1992conditions}
B.~Hendrickson.
\newblock Conditions for unique graph realizations.
\newblock {\em SIAM journal on computing}, 21(1):65--84, 1992.

\bibitem{JSS-planeThreshold}
B.~Jackson, B.~Servatius, and H.~Servatius.
\newblock The 2-dimensional rigidity of certain families of graphs.
\newblock {\em J. Graph Theory}, 54(2):154--166, 2007.

\bibitem{chapter:Jordan}
T.~Jord\'an.
\newblock Combinatorial rigidity: graphs and matroids in the theory of rigid frameworks.
\newblock In {\em Discrete geometric analysis}, volume~34 of {\em MSJ Mem.}, pages 33--112. Math. Soc. Japan, Tokyo, 2016.

\bibitem{jordan2017Aglobal}
T.~Jord{\'a}n.
\newblock Extremal problems and results in combinatorial rigidity.
\newblock In {\em Proc. Hungarian Japanese Symposium on Discrete Mathematics and Its Applications}, pages 297--304, 2017.

\bibitem{Jordan-Tanigawa:RigitidyThreshold}
T.~Jord\'{a}n and S.~Tanigawa.
\newblock Rigidity of random subgraphs and eigenvalues of stiffness matrices.
\newblock {\em Egerv\'{a}ry Research Group, www.cs.elte.hu/egres}, TR-2020-08, 2020.

\bibitem{jordan2017global}
T.~Jord{\'a}n and W.~Whiteley.
\newblock Global rigidity.
\newblock In {\em Handbook of Discrete and Computational Geometry}, pages 1661--1694. Chapman and Hall/CRC, 2017.

\bibitem{Kiraly-Theran:RigitidyThreshold}
F.~J. Kir\'{a}ly and L.~Theran.
\newblock Coherence and sufficient sampling densities for reconstruction in compressed sensing.
\newblock {\em arXiv:1302.2767}, 2013.

\bibitem{krivelevich2023rigid}
M.~Krivelevich, A.~Lew, and P.~Michaeli.
\newblock Rigid partitions: from high connectivity to random graphs.
\newblock {\em arXiv preprint arXiv:2311.14451}, 2023.

\bibitem{Laman}
G.~Laman.
\newblock On graphs and rigidity of plane skeletal structures.
\newblock {\em Journal of Engineering Mathematics}, 4:331--340, 1970.

\bibitem{LNPR}
A.~Lew, E.~Nevo, Y.~Peled, and O.~E. Raz.
\newblock Sharp threshold for rigidity of random graphs.
\newblock {\em Bulletin of the London Mathematical Society}, 55(1):490--501, 2023.

\bibitem{montgomery2024global}
R.~Montgomery, R.~Nenadov, and T.~Szab{\'o}.
\newblock Global rigidity of random graphs in {$\mathbb R$}.
\newblock {\em arXiv preprint arXiv:2401.10803}, 2024.

\bibitem{tanigawa2015sufficient}
S.~Tanigawa.
\newblock Sufficient conditions for the global rigidity of graphs.
\newblock {\em Journal of Combinatorial Theory, Series B}, 113:123--140, 2015.

\bibitem{villain}
S.~Vill{\'a}nyi.
\newblock Every {$d(d+1)$}-connected graph is globally rigid in {$\mathbb R^d$}.
\newblock {\em arXiv preprint}, arXiv:2312.02028, 2023.

\end{thebibliography}

\end{document}